\newtheorem{theorem}{Theorem}
\newtheorem{lemma}{Lemma}
\newtheorem{prop}{Proposition}
\newtheorem{cor}{Corollary}
\newtheorem{remark}{Remark}
\theoremstyle{definition}
\def\re{\mathbb{R}}
\def\N{\mathbb{N}}
\def\({\left(}
\def\){\right)}
\def\[{\left[}
\def\]{\right]}
\def\pd{\partial}
\def\lap{\Delta}
\def\ep{\varepsilon}
\def\w{\omega}
\def\la{\lambda}
\begin{document}

\begin{frontmatter}



\title{Explicit optimal constants of two critical Rellich inequalities for radially symmetric functions}


\author[M]{Megumi Sano}
\ead{smegumi@hiroshima-u.ac.jp}
\address[M]{Laboratory of Mathematics, Graduate School of Engineering,
Hiroshima University, Higashi-Hiroshima, 739-8527, Japan}

\begin{keyword}
the Rellich inequality \sep optimal constant \sep limiting case \sep minimization problem

\MSC[2010] 35A23 \sep 46E35 \sep 46B50 
\end{keyword}

\date{\today}

\begin{abstract}
We consider two critical Rellich inequalities with singularities at both the origin and the boundary in the higher order critical radial Sobolev spaces $W_{0, {\rm rad}}^{k, p}$, where $1< p = \frac{N}{k}$. 
We give the explicit values of the optimal constants of two critical Rellich inequalities for radially symmetric functions in $W_{0, {\rm rad}}^{k, \frac{N}{k}}$.
Furthermore the (non-)attainability of the optimal constants are also discussed. 
\end{abstract}

\end{frontmatter}



%
%
\section{Introduction}

Let $N \ge 2, 1 < p < N$ and $B_R \subset \re^N$ be the ball with the center $0$ and the radius $R >0$. The classical Hardy inequality:
\begin{equation}
\label{H_p}
\( \frac{N-p}{p} \)^p \int_{B_R} \frac{|u|^p}{|x|^p} dx \le \int_{B_R} | \nabla u |^p dx
\end{equation}
holds for all $u \in W^{1,p}_0(B_R)$, where $W_0^{1,p}(B_R)$ is a completion of $C_c^{\infty}(B_R)$ with respect to the norm $\| \nabla (\cdot )\|_{L^p(B_R)}$. We refer the celebrated work by G. H. Hardy \cite{Hardy(1919)}. 
The inequality (\ref{H_p}) has great applications to partial differential equations, for example stability, global existence and instantaneous blow-up and so on. See e.g. \cite{BV}, \cite{BG}. 
It is well-known that $(\frac{N-p}{p})^p$ in (\ref{H_p}) is the optimal constant, $(\frac{N-p}{p})^p$ is not attained in $W_0^{1,p}(B_R)$ and (\ref{H_p}) expresses the embedding $W_0^{1,p} \hookrightarrow L^{p^*, p}$, where $p^* = \frac{Np}{N-p}$ is the Sobolev critical exponent and $L^{p, q}$ are the Lorentz spaces. 
By an inclusion property of the Lorentz spaces: $L^{p^*, p} \hookrightarrow L^{p^*, q}$ for any $q \in [p, \infty]$, the Hardy inequality (\ref{H_p}) is stronger than the Sobolev inequality: $S_{N,p} \| u\|_{p^*} \le \| \nabla u\|_p$ in the view of the embedding as follows.
$$
W_0^{1,p} \hookrightarrow L^{p^*, p} \hookrightarrow L^{p^*, p^*} = L^{p^*}
$$
It is also well-known that Sobolev's optimal constant $S_{N,p}$ is not attained on $B_R$ and $S_{N,p}$ is attained on $\re^N$ (ref. \cite{Au, T}), 

%
%

On the other hand, in the critical case where $p=N$, 
both two inequalities become trivial inequalities since two optimal constants $(\frac{N-p}{p})^p, S_{N,p}$ become zero. Instead of these two inequalities, the Trudinger-Moser inequality is considered as a limiting case of the Sobolev inequality (ref. \cite{Tru, M}) and the critical Hardy inequality: 
\begin{align}\label{H_N}
\( \frac{N-1}{N} \)^N \int_{B_R} \dfrac{|u|^N}{|x|^N (\log \frac{aR}{|x|})^N} dx \le \int_{B_R} | \nabla u |^N dx \quad \( \forall u \in W^{1,N}_0(B_R),\, a \ge1 \,\)
\end{align}
is considered as a limiting case of the Hardy inequality (ref. \cite{L, La, BFT(IUMJ), BFT(TAMS), MOW(Tohoku), TF}). 
It is known that $(\frac{N-1}{N})^N$ in (\ref{H_N}) is the optimal constant, $(\frac{N-1}{N})^N$ is not attained in $W_0^{1,N}(B_R)$ (ref. \cite{ASand, AE, II} etc.) and (\ref{H_N}) expresses the embedding $W_0^{1,N} \hookrightarrow L^{\infty, N} (\log L)^{-1}$, where $L^{p, q}(\log L)^r$ are the Lorentz-Zygmund spaces. 
By an inclusion property of the Lorentz-Zygmund spaces: $L^{\infty, N} (\log L)^{-1} \hookrightarrow L^{\infty, q}(\log L)^{-1+ \frac{1}{N} -\frac{1}{q}}$ for any $q \in [N, \infty]$ (see e.g. \cite{BS} Theorem 9.5), the critical Hardy inequality (\ref{H_N}) is stronger than the Trudinger-Moser inequality in the view of the embedding as follows.
$$
W_0^{1,N} \hookrightarrow L^{\infty, N} (\log L)^{-1} \hookrightarrow L^{\infty, \infty}(\log L)^{-1+ \frac{1}{N}} = {\rm ExpL}^{\frac{N}{N-1}}
$$
It is also known that Trudinger-Moser's optimal constant is attained on $B_R$ (ref. \cite{CC}). For these embedding theorems, definitions and classification of these function spaces (the rearrangement invariant spaces, the Orlicz spaces and so on), see e.g. \S 1 in \cite{CF}. 

%
%

In the present paper, we focus on the higher order case. First, a higher order generalization of (\ref{H_p}) was proved by Rellich \cite{Rellich} in $W^{2,2}_0(B_R ), N \ge 5$. 
In general, let $k, m \in \N, m \ge 1, k \ge 2$ and $1<p< \frac{N}{k}$. Set $|u|_{k,p} = \| \nabla^k u \|_{L^p(B_R)}$,
\begin{align*}
&\nabla^k u = \begin{cases}
              \lap^m u  \quad &\text{if} \,\, k=2m,\\
              \nabla \lap^m u  &\text{if} \,\, k=2m+1,
              \end{cases}\\
&A_{k,p} = \begin{cases}
              \prod_{\ell =1}^{m} \dfrac{\{ N-2\ell p \} \{ N(p-1) +2(\ell -1)p \} }{p^2} \quad &\text{if} \,\, k=2m,\\
              \frac{N-p}{p} \prod_{\ell =1}^{m} \dfrac{\{ N-(2\ell +1)p \} \{ N(p-1) +(2\ell  -1)p \} }{p^2}  &\text{if} \,\, k=2m+1.
              \end{cases}
\end{align*}
Then the Rellich inequality:
\begin{equation}\label{Rel}
|u|^p_{k,p} \ge A_{k,p}^p \int_{B_R} \frac{|u|^p}{|x|^{kp}} dx
\end{equation}
holds for all $u \in W_0^{k,p}(B_R )$, where $W_0^{k,p}(B_R)$ is a completion of $C_c^{\infty}(B_R)$ with respect to $| (\cdot )|_{k,p}$. It is also known that $A_{k,p}^p$ are the optimal constants 
(ref. \cite{DH, Mitidieri, GGM, ST(Rellich)} ). For the higher order Sobolev inequalities, we refer \cite{R, Lions, Sw, CN}.

%
%

In the critical case where $p=\frac{N}{k}$, both two inequalities do not hold. Instead of the higher order Sobolev inequalities, the higher order Trudinger-Moser inequalities which are called the Adams inequalities are considered (ref. \cite{A}, see also \cite{RS}). Especially, Adams \cite{A} gave the explicit values of the optimal exponents of them in $W_0^{k,\frac{N}{k}}(B_R)$. On the other hand, instead of the Rellich inequalities (\ref{Rel}), the second order critical Rellich inequality:
\begin{equation}\label{R_2}
\int_{B_R} |\lap u|^{\frac{N}{2}} \,dx \ge \( \frac{N-2}{\sqrt{N}} \)^N \int_{B_R} \frac{|u|^{\frac{N}{2}}}{|x|^N \( \log \frac{aR}{|x|} \)^{\frac{N}{2}}} dx \quad \( \forall u \in W^{2,\frac{N}{2}}_0(B_R),\, a \gg 1 \)
\end{equation}
is considered (ref. \cite{DHA, AGS, ASant, CM(2012)}). 
It is known that the constant $\( \frac{N-2}{\sqrt{N}} \)^N$ in (\ref{R_2}) is optimal. 
In the higher order case where $W_{0,{\rm rad}}^{k,2}(B_R ) \,(k \ge 3)$, the validities of the critical Rellich inequalities and the optimal constants are treated in a pioneer work by \cite{ASant}. But, unfortunately, it seems that the argument in \cite{ASant} contains a gap, see Remark \ref{rem AS}. 
The purpose of this paper is to show the validities of the critical Rellich inequalities and give the optimal exponents and the optimal constants in $W_{0,{\rm rad}}^{k,\frac{N}{k}}(B_R )$ when $a=1$. 
Namely, we show the positivity of the optimal constants $R^{{\rm rad}}_{k, \gamma}$ and give the optimal exponents with respect to $\gamma$ and the explicit values of $R^{{\rm rad}}_{k, \gamma}$. Furthermore, we show the (non-)attainability of $R^{{\rm rad}}_{k, \gamma}$. 
Here $R_{k, \gamma}$ and $R^{{\rm rad}}_{k, \gamma}$ are given by
\begin{align*}
R_{k, \gamma} = \inf_{u \in W_0^{k,p}(B_R ) \setminus \{ 0\} } \dfrac{|u|^p_{k,p}}{ \int_{B_R} \frac{|u|^p}{|x|^N \( \log \frac{R}{|x|} \)^\gamma} \,dx}, 
\quad R^{{\rm rad}}_{k, \gamma} = \inf_{u \in W_{0,{\rm rad}}^{k,p}(B_R ) \setminus \{ 0\} } \dfrac{|u|^p_{k,p} }{ \int_{B_R} \frac{|u|^p}{|x|^N \( \log \frac{R}{|x|} \)^\gamma} \,dx}. 
\end{align*}
In the case where $a>1$, the potential function $|x|^{-N} \( \log \frac{aR}{|x|} \)^{-\gamma}$ has a singularity only at the origin. Thus there is an optimal exponent with respect to $\gamma$ in this case. However, in the case where $a=1$, the potential function has singularities not only at the origin but also at the boundary $\pd B_R$. Therefore there are two optimal exponents with respect to $\gamma$ in this case.

\noindent
Our main result is as follows.

\begin{theorem}\label{Main thm}
Let $N > k \ge 2, m \in \N$ and $p=\frac{N}{k}$. 
Then we have the followings.

\noindent
(i) $R_{k, \gamma}=R^{{\rm rad}}_{k, \gamma} = 0$ if $\gamma \not\in [p,N]$, and $R^{{\rm rad}}_{k, \gamma} > 0$ if $\gamma \in [p,N]$.\\
(ii) $R^{{\rm rad}}_{k, \gamma}$ is attained if $\gamma \in (p,N)$.\\
(iii) $R^{{\rm rad}}_{k, \gamma}$ is not attained if $\gamma = p, N$. 
Moreover we have
\begin{align*}
R^{{\rm rad}}_{k, p}= 
\begin{cases}
\( \frac{N-k}{kN} \prod_{j=1}^m 2j \,(N-2j)\, \)^p &\text{if}\,\, k=2m, \\
\( \frac{N-k}{N} \prod_{j=1}^m 2j \,(N-2j) \,\)^p &\text{if}\,\, k=2m+1,
\end{cases}\quad
R^{{\rm rad}}_{k, N}= \( \prod_{j=1}^k \frac{jN-k}{N} \)^p. 
\end{align*} 
\end{theorem}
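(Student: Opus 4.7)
\emph{Step 1: Reduce to one dimension.} My plan is to pass from the radial problem on $B_R$ to a weighted problem on the half-line via the logarithmic substitution $t=\log(R/|x|)$. For radial $u \in W_{0,{\rm rad}}^{k,p}(B_R)$ I set $f(t) = u(Re^{-t})$. Induction on $j$, based on the identity
\begin{equation*}
r^{2}\Delta(r^{-2j}g(t)) = r^{-2j}(D+2j)(D-(N-2-2j))g(t), \qquad D = \tfrac{d}{dt},
\end{equation*}
yields $|x|^{k}|\nabla^{k}u| = |\mathcal{L}_{k}f|$ with $\mathcal{L}_{k} = \prod_{\alpha \in S_{k}}(D - \alpha)$, where $S_{2m} = \{-2j : 0 \le j \le m-1\} \cup \{N-2j : 1 \le j \le m\}$ and $S_{2m+1} = S_{2m} \cup \{-2m\}$. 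Because $p=N/k$ makes $|x|^{N-1-kp}\,dr = r^{-1}\,dr$, polar coordinates together with $t=\log(R/r)$ convert the Rayleigh quotient to
\begin{equation*}
R_{k,\gamma}^{{\rm rad}} = \inf_{f}\frac{\int_{0}^{\infty}|\mathcal{L}_{k}f|^{p}\,dt}{\int_{0}^{\infty}|f|^{p}t^{-\gamma}\,dt},
\end{equation*}
over smooth $f$ with $f^{(j)}(0)=0$ for $0\le j<k$ (coming from $u\in W_{0}^{k,p}$).

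\emph{Step 2: Two critical exponents; vanishing for $\gamma\notin[p,N]$.} The power ansatz $f(t)=t^{\sigma}$ produces $\mathcal{L}_{k}t^{\sigma}\sim[\sigma]_k\,t^{\sigma-k}$ at $t=0$ with $[\sigma]_k=\sigma(\sigma-1)\cdots(\sigma-k+1)$, isolating the critical exponent $\sigma_{0}=(N-1)/p=k-1/p$ matched by $\gamma=N$; and $\mathcal{L}_{k}t^{\sigma}\sim\sigma\prod_{\alpha\in S_k\setminus\{0\}}(-\alpha)\,t^{\sigma-1}$ at $t=\infty$, since only the single factor $D=(D-0)$ in $\mathcal{L}_{k}$ lowers the order at infinity, isolating $\sigma_{\infty}=(p-1)/p=(N-k)/N$ matched by $\gamma=p$. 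For $\gamma\notin[p,N]$ I concentrate $f=t^{\sigma}\chi(t)$ near $t=0$ (when $\gamma>N$, with $\sigma$ just above $(\gamma-1)/p$) or near $t=\infty$ (when $\gamma<p$, with $\sigma$ just below $(\gamma-1)/p$); a short calculation drives the ratio to $0$, giving $R_{k,\gamma}^{{\rm rad}}=0$. Since $0\le R_{k,\gamma}\le R_{k,\gamma}^{{\rm rad}}$ trivially, also $R_{k,\gamma}=0$.

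\emph{Step 3: Endpoint values.} For $\gamma=N$ I test with $f_{\sigma}(t)=t^{\sigma}\chi(t)$ and let $\sigma\downarrow\sigma_{0}$: both integrals diverge logarithmically at matching rates, and the residual ratio is $[\sigma_{0}]_k^p=\prod_{i=1}^{k}((iN-k)/N)^{p}$, using $\sigma_{0}-j=((k-j)N-k)/N$. For $\gamma=p$, letting $\sigma\uparrow\sigma_{\infty}$ similarly produces $\sigma_{\infty}^p\prod_{\alpha\ne 0}|\alpha|^{p}=((p-1)/p)^{p}\prod_{\alpha\in S_{k}\setminus\{0\}}|\alpha|^{p}$, which a direct expansion of $S_k$ rearranges into the stated formula in each parity of $k$. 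The matching \emph{lower} bounds are the main technical obstacle; the plan is to iterate the weighted one-dimensional Hardy inequality
\begin{equation*}
\int_{0}^{\infty}|g'|^{p}\,t^{a}\,dt\ge\left(\tfrac{p-1-a}{p}\right)^{p}\int_{0}^{\infty}|g|^{p}\,t^{a-p}\,dt
\end{equation*}
along the factorisation of $\mathcal{L}_{k}$: peel the $\alpha=0$ factor first with $a=0$ to gain the factor $((p-1)/p)^{p}$ and a shift in the Hardy exponent, and handle each remaining factor $(D-\alpha)$, $\alpha\ne 0$, through the conjugation identity $(D-\alpha)h=e^{\alpha t}(e^{-\alpha t}h)'$ combined with a careful accumulation of weights so that the constants telescope to the product above.

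\emph{Step 4: Interpolation and attainment.} For $\gamma\in(p,N)$ write $\gamma=\theta p+(1-\theta)N$ with $\theta=(N-\gamma)/(N-p)\in(0,1)$; Hölder's inequality gives $\int|f|^{p}t^{-\gamma}\,dt\le(\int|f|^{p}t^{-p}\,dt)^{\theta}(\int|f|^{p}t^{-N}\,dt)^{1-\theta}$, and combining with the two endpoint inequalities yields $R_{k,\gamma}^{{\rm rad}}\ge(R_{k,p}^{{\rm rad}})^{\theta}(R_{k,N}^{{\rm rad}})^{1-\theta}>0$. Attainment for $\gamma\in(p,N)$ follows from the compactness of the corresponding 1D embedding into $L^{p}(t^{-\gamma}dt)$, the weight being strictly subcritical at both endpoints, so any minimising sequence is precompact after invoking the radial framework. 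Non-attainment at $\gamma=p$ and $\gamma=N$ follows from the saturation analysis in Step 3: any minimiser would have to be asymptotic to $t^{\sigma_{\infty}}$ (resp.\ $t^{\sigma_{0}}$) at the critical endpoint, which lies outside the admissible space, and every extremising sequence accordingly escapes in norm.
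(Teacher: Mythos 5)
Your Step~1 (the logarithmic change of variables $t=\log(R/|x|)$ and the factorisation $|x|^{k}|\nabla^{k}u|=|\mathcal{L}_{k}f|$ with $\mathcal{L}_{k}=\prod_{\alpha\in S_k}(D-\alpha)$) is correct and is a genuinely cleaner reformulation than the paper's, which works directly with radial Hardy--Rellich inequalities on $B_R$. Your index sets $S_{2m}$ and $S_{2m+1}$ are right, and the fact that $p=N/k$ makes the Jacobian collapse to $dt$ on both sides of the Rayleigh quotient is exactly why the $1$D picture is so transparent. The two critical exponents $\sigma_0=(N-1)/p$ and $\sigma_\infty=(p-1)/p$, the vanishing of the infimum for $\gamma\notin[p,N]$, and the upper-bound test-function computations in Step~3 all check out; your constants $[\sigma_0]_k^p$ and $\sigma_\infty^p\prod_{\alpha\neq 0}|\alpha|^p$ do agree with the claimed $R^{\rm rad}_{k,N}$ and $R^{\rm rad}_{k,p}$. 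In the paper, these upper bounds are obtained via the test functions $\bigl(\log\frac{R}{|x|}\bigr)^{\frac{p-1}{p}-\ep}\varphi$ and $\bigl(\log\frac{R}{|x|}\bigr)^{\frac{N-1}{p}+\ep}(1-\varphi)$, which are exactly your $t^\sigma\chi(t)$ in disguise, so Steps~2--3 (upper bounds) are essentially equivalent to the paper.

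The genuine gap is the lower bound in Step~3, which you flag as ``the main technical obstacle'' but do not carry out, and the sketched plan does not close it. The claim that after conjugating $(D-\alpha)g=e^{\alpha t}(e^{-\alpha t}g)'$ ``the constants telescope to the product above'' fails to account for a sign problem when $\alpha>0$ (which occurs for all the factors $\alpha=N-2j$, $1\le j\le m$, since $N>k$). Concretely: writing $v=e^{-\alpha t}g$ and integrating $\bigl(|v|^{p}e^{\alpha pt}t^{-p}\bigr)'$ produces the identity
\begin{equation*}
-\alpha p\int_0^\infty|v|^{p}e^{\alpha pt}t^{-p}\,dt+p\int_0^\infty|v|^{p}e^{\alpha pt}t^{-p-1}\,dt=p\int_0^\infty|v|^{p-2}v\,v'\,e^{\alpha pt}t^{-p}\,dt,
\end{equation*}
and for $\alpha>0$ the first term on the left has the wrong sign, so one cannot simply drop the second term and apply H\"older to extract $\int|v'|^{p}e^{\alpha pt}t^{-p}\ge|\alpha|^{p}\int|v|^{p}e^{\alpha pt}t^{-p}$. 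The same obstruction appears for the $\gamma=N$ chain, where each step is supposed to contribute the bare Hardy constant $\bigl(\frac{jp-1}{p}\bigr)^p$ with the exponential weight playing no role: again the sign of the $\alpha p$ term is adverse. The paper avoids this by not conjugating at all; it applies the Davies--Hinz subcritical Rellich inequality, Musina's radial inequality, and the new log-weighted radial Hardy inequality \emph{in the $x$-variables}, to the auxiliary function $r^{N-1}u'(r)$ rather than to $u$ directly, and the different placement of the integration by parts produces cancellations that the naive $1$D conjugation misses. Tellingly, \S 5 of the paper is devoted to showing that Adimurthi--Santra's earlier derivation of the same constant fails precisely because they used the ``obvious'' chain of Rellich-type estimates in the wrong order; your plan, as written, has not yet shown that it avoids the same pitfall. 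To complete Step~3 you would need, for each $\alpha\in S_k\setminus\{0\}$, a worked-out weighted $1$D inequality with the exact constant, valid under the boundary behaviour of the intermediate functions $h_i$ at both $t=0$ and $t=\infty$ (the $h_i$ tend to nonzero constants at infinity, so decay cannot be assumed), and you would need to verify that the accumulated constant equals the product obtained from the test-function calculation.

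Steps~4 (interpolation and compactness, and the heuristic for non-attainment at the endpoints) are sound in outline; the paper's Proposition on compactness and its remainder-term argument for non-attainment make these rigorous, and your sketch would translate without difficulty once the lower bounds in Step~3 are in place.
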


Actually, the explicit value of $R^{{\rm rad}}_{k, p}$ in Theorem \ref{Main thm} (iii) is already founded by \cite{N} on the homogeneous groups.  

\begin{remark}
From the known results \cite{O, DHA, BT(2006)}, we see that $R_{2,\gamma} >0$ if $\gamma \in [p, N]$ since for $a >1$
\begin{align*}
\frac{1}{|x|^N \( \log \frac{R}{|x|} \)^p} \le \frac{2}{|x|^N \( \log \frac{aR}{|x|} \)^p}\,\,\text{holds near the origin and}\,\, \log \frac{R}{|x|} \ge \frac{{\rm dist} (x, \pd B_R)}{R}.
\end{align*} 
\end{remark}

\begin{remark}
If $p=2$, that is $N=2k$, then $R^{{\rm rad}}_{k, N} = \frac{(2k-1)^2 (2k-3)^2 \cdots 1^2}{4^k}$ which is same as the optimal constant of the geometric type Rellich inequality by Owen \cite{O}. 
\end{remark}

\begin{remark}
It is already known that $R_{2, \gamma}=R^{{\rm rad}}_{2, \gamma}$ and $R_{2, \gamma}$ is not attained for $\gamma = p,N$ for $p=2$ (see \cite{CM(2012)}),  $R_{3, 2}=R^{{\rm rad}}_{3, 2}$ and $R_{3, 2}$ is not attained (see \cite{N}). We conjecture that for $\gamma = p, N$, $R_{k, \gamma}=R^{{\rm rad}}_{k, \gamma}$ also hold for any $k$ and $N$. 
\end{remark}

As a corollary of Theorem \ref{Main thm}, we also obtain a non-sharp ($a>1$) critical Rellich inequality for radially symmetric functions as follows.

\begin{cor}\label{cor main}
Let $N > k \ge 2, m \in \N, p=\frac{N}{k}$ and $a \ge 1$.
Then the inequality
\begin{align}\label{NSCR p}
R_{k,p}^{{\rm rad}} \int_{B_R} \frac{|u|^p}{|x|^{N} \( \log \frac{aR}{|x|} \)^p} \,dx \le \int_{B_R} |\nabla^k u|^p\,dx
\end{align}
holds for any $u \in W_{0, {\rm rad}}^{k, p}(B_R)$, where $R_{k,p}^{{\rm rad}}$ is given in Theorem \ref{Main thm}. Moreover the optimal constant of (\ref{NSCR p}) is $R_{k,p}^{{\rm rad}}$ which is indenpendent of $a \ge 1$, and is not attained.
\end{cor}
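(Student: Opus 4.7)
\textbf{Proof plan for Corollary~\ref{cor main}.}

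The strategy is to deduce the corollary directly from Theorem~\ref{Main thm}(iii) at $\gamma = p$ by exploiting the elementary pointwise comparison
$$\log \frac{aR}{|x|} \;=\; \log a + \log \frac{R}{|x|} \;\ge\; \log \frac{R}{|x|} \;\ge\; 0 \qquad (x \in B_R,\; a\ge 1).$$
Raising to the $p$-th power and taking reciprocals gives $|x|^{-N}(\log(aR/|x|))^{-p} \le |x|^{-N}(\log(R/|x|))^{-p}$ on $B_R$. Multiplying by $|u|^p$, integrating, and applying Theorem~\ref{Main thm}(iii) to the right-hand side yields \eqref{NSCR p} with constant $R^{\rm rad}_{k,p}$, valid for every $a \ge 1$.

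For the optimality of $R^{\rm rad}_{k,p}$ when $a>1$, denote by $C_a$ the sharp constant in \eqref{NSCR p}. The inequality just derived gives $C_a \ge R^{\rm rad}_{k,p}$. For the reverse, I would test with a sequence $\{u_n\}\subset W^{k,p}_{0,{\rm rad}}(B_R)$ concentrating at the origin. The point is that
$$\frac{\log(aR/|x|)}{\log(R/|x|)} \;=\; 1 + \frac{\log a}{\log(R/|x|)} \;\longrightarrow\; 1 \quad\text{as } |x| \to 0,$$
so along any sequence whose mass (measured by the weight $|x|^{-N}(\log(R/|x|))^{-p}$) concentrates at $0$, the two weighted integrals are asymptotically equal. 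The natural choice of $\{u_n\}$ is a minimizing sequence for the $a=1$ problem from Theorem~\ref{Main thm}(iii); since the infimum there is not attained and $\gamma = p$ encodes the critical logarithmic behavior at the origin, such a sequence can be arranged to concentrate at $0$, yielding $C_a \le R^{\rm rad}_{k,p}$.

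Non-attainment of $R^{\rm rad}_{k,p}$ in \eqref{NSCR p} is part of Theorem~\ref{Main thm}(iii) when $a=1$. For $a>1$, the pointwise comparison above is \emph{strict} almost everywhere on $B_R$, so for any radial $u \not\equiv 0$ one has
$$R^{\rm rad}_{k,p} \!\int_{B_R}\!\! \frac{|u|^p}{|x|^N (\log(aR/|x|))^p}\,dx \;<\; R^{\rm rad}_{k,p} \!\int_{B_R}\!\! \frac{|u|^p}{|x|^N (\log(R/|x|))^p}\,dx \;\le\; |u|^p_{k,p},$$
which rules out equality in \eqref{NSCR p} and hence proves non-attainment.

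The main obstacle I anticipate is the middle step: verifying that a minimizing sequence for the $a=1$ critical Rellich inequality at $\gamma = p$ concentrates at the origin rather than escaping to the boundary $\partial B_R$ (where the weight $(\log(R/|x|))^{-p}$ is also singular). If ruling this out from general concentration-compactness considerations proves delicate, a cleaner alternative is to build an explicit concentrating family — for instance, radial profiles of the form $U\bigl(r_n^{-1}\log(R/|x|)\bigr)$ or suitable truncations of the natural critical radial profile identified by the Emden–Fowler type transformation used to prove Theorem~\ref{Main thm} — supported in shrinking balls $B_{\rho_n}$ with $\rho_n \to 0$, and then compute the ratio of weighted integrals directly; the $\log a$ correction is negligible in that limit.
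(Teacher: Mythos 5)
Your first step (pointwise comparison $\log(R/|x|) \le \log(aR/|x|)$, then apply Theorem~\ref{Main thm}(iii)) is identical to the paper's. For sharpness, your main route --- an abstract minimizing sequence for the $a=1$ problem plus a concentration argument --- has exactly the gap you flag: for $\gamma = p$ the weight $|x|^{-N}(\log(R/|x|))^{-p}$ is singular at both the origin and $\partial B_R$, so ruling out boundary escape is not automatic. The paper skips this entirely and goes straight to your ``cleaner alternative'': it takes the test function $\phi_\varepsilon$ from Part~II of the proof of Theorem~\ref{Main thm} and simply replaces $R$ by $aR$ inside the logarithm, i.e.\ $\phi^{(a)}_\varepsilon(x) = \bigl(\log(aR/|x|)\bigr)^{\frac{p-1}{p}-\varepsilon}\varphi(x)$ with the \emph{same fixed cutoff} $\varphi$. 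Contrary to your suggestion, these are not supported on shrinking balls $B_{\rho_n}$; no shrinking is needed, because as $\varepsilon \to 0$ both numerator and denominator of the Rayleigh quotient diverge, with the dominant contributions coming from near $r=0$, where --- as you yourself observe --- the $\log a$ shift is negligible. Finally, your non-attainment argument via the strict almost-everywhere inequality between the weights for $a>1$ is correct, and it is the clean way to spell out the paper's terse assertion that non-attainment follows from the $a=1$ case.
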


\begin{remark}\label{rem AS}(Optimal constant on $W_{0, {\rm rad}}^{k,2} (k \ge 3)$)
Let $a \gg 1$. For simplicity, we consider the case where $k=2m, m \ge 2$ and $N=4m$. 
In Theorem 2.3.(a) in \cite{ASant}, the authors showed that the inequality
\begin{align}\label{AS thm}
A(N,m)^2 \int_{B_1} \frac{|u|^2}{|x|^{4m} \( \log \frac{a}{|x|} \)^2} \,dx \le \int_{B_1} |\lap^m u|^2\,dx
\end{align}
holds for any $u \in W_{0, {\rm rad}}^{2m, 2}(B_1)$, where the constant $A(N,m)$ is given by 
\begin{align*}
A(N,m) = \frac{N}{4} \( \frac{1}{2^{2m-2}} \prod_{i=0}^{m-2} (4i +2) (8m -4i -6) \),
\end{align*}
and $A(N,m)$ is the optimal constant of (\ref{AS thm}). But, unfortunately, only the argument for showing the optimality of $A(N,m)$ in \cite{ASant} contains a gap. In fact, they used the following radial test function $\psi_\delta$.
\begin{align*}
\psi_\delta (x) = \( \log \frac{a}{|x|} \)^{A(N,m) \( \prod_{i=1}^m (N-2i) \)^{-1} -\delta} \varphi (x),
\end{align*}
where $\varphi \in C_c^{\infty}(B_1)$ is a radial function, where $\varphi \equiv 1$ on $B_{1/2}$ and $\varphi \equiv 0$ on $B_1 \setminus B_{3/4}$. 
For example, if we consider the case where $m=3$, that is, $N=12$, then we have
\begin{align*}
A(N,m) \( \prod_{i=1}^m (N-2i) \)^{-1} = \frac{189}{160} > \frac{2-1}{2}
\end{align*}
which implies that $| \psi_\delta |_{W_0^{4,2}} = \infty$ for any small $\delta >0$.  Therefore it seems that $\psi_\delta$ is unsuitable as a test function since $\psi_\delta \not\in W_{0,{\rm rad}}^{6,2}(B_1)$. See also \S \ref{gap}. 

The optimal constant of (\ref{AS thm}) is $R^{{\rm rad}}_{2m,2}$ in Theorem \ref{Main thm}.
\end{remark}

A few comments are in order. 
Our minimization problem $R^{{\rm rad}}_{2m,\gamma}$ is related to the following polyhoarmonic elliptic equation with Dirichlet boundary conditions in the critical dimensions $N=4m$:
\begin{align}\label{EL}
\begin{cases}
(-\lap)^m u = R^{{\rm rad}}_{2m,\gamma} \,\frac{u}{|x|^N (\log \frac{R}{|x|})^{\gamma}} \quad &\text{in} \,\, B_R \subset \re^N, \\
\qquad D^{\, \beta} u = 0  \,\,( \,|\,\beta| \le m-1 )  &\text{on} \,\, \pd B_R.
\end{cases}
\end{align}
The minimizer for $R^{{\rm rad}}_{2m,\gamma}$ is a ground state radial solution of the Euler-Lagrange equation (\ref{EL}). 

By \cite{DHA, AGS, ASant, CM(2012)}, the critical Rellich inequalities have been studied so far based on ``a reduction of the dimension argument'' which is also called Brezis-Vazquez transformation or Maz'ya  transformation (ref. Corollary 3. in Section 2.1.7 in \cite{Mazya}, \cite{BV}). 
In contrast with it, our argument is based on the argument by Davies-Hinz \cite{DH} for the subcritical Rellich inequalities. 
This argument is more direct and calculations are simpler than it especially in the higher order case.

Since the function $|x|^{-N} \( \log \frac{aR}{|x|} \)^{-\gamma}$ is radially decreasing on $B_R$ if $a \ge e^{\frac{\gamma}{N}}$. Therefore, we might expect to obtain a non-sharp critical Rellich inequality:
\begin{align}\label{NSCR}
R^{{\rm rad}}_{k, \gamma} \int_{B_R} \frac{|u|^p}{|x|^N \( \log \frac{aR}{|x|} \)^\gamma} \,dx \le \int_{B_R} |\nabla^k u|^p\,dx
\end{align}
at least for $u \in W_\vartheta^{k,p} (B_R) := \biggr\{ u \in W^{k,p} (\Omega) \,\,\biggr|\,\, \lap^j u_{|_{\pd \Omega}} = 0$ in the sense of traces for $j \in \left[ 0, \frac{k}{2} \right)\,\, \biggr\}$ 
by Theorem \ref{Main thm} and the iterated Talenti comparison principle (ref. Theorem 1 in \cite{T(CP)} and Proposition 3 in \cite{GGS}). 
Unfortunately this does not follows from our result directly since we use the assumption $u'(R)=0$ for radial function $u$ to show our result, see also \cite{GGM, GGS}. However we believe that this might be a just technical reason and the non-sharp critical Rellich inequality (\ref{NSCR}) holds for any $u \in W_0^{k,p}(B_R)$. 



This paper is organized as follows:
In \S \ref{limiting form}, we consider {\it a limit} of the second order subcritical Rellich inequality (\ref{Rel}) as $p \nearrow \frac{N}{2}$ via some transformation. 
Of course, we can not consider a limit of the subcritical Rellich inequality (\ref{Rel}) in the usual sense. However, we can consider a limit of an inequality which is equivalent to the subcritical Rellich inequality (\ref{Rel}) via the transformation. 
This observation is inspired by a paper \cite{I} which is a study for the Hardy and the  Sobolev inequalities. 
For some limits of the Hardy and the Sobolev inequalities, see a survey \cite{S(RIMS)}. 
From this observation, we can catch a glimpse of the strategy of the proof of the second order critical Rellich inequality. More precisely, we observe that some Hardy type inequalities are important ingredients to show the critical Rellich inequality. 
In order to show the higher order critical Rellich inequalities in Theorem \ref{Main thm}, we need more general Hardy type inequalities than it. 
Therefore, in \S \ref{S new Hardy}, we study such Hardy type inequalities and their improvements. 
To the best of our knowledge, these Hardy type inequalities are new. 
In \S \ref{critical Rellich}, we show Theorem \ref{Main thm} by dividing two parts. 
In Part I, we derive the critical Rellich inequalities from the Hardy type inequalities in \S \ref{S new Hardy} and the subcritical Rellich type inequalities. And also, we show the attainability of $R_{k,\gamma}^{{\rm rad}}$ for $\gamma \in (p,N)$. 
In Part II, we calculate the explicit values of $R_{k,\gamma}^{{\rm rad}}$ for $\gamma =p, N$ and show the non-attainability of $R_{k,\gamma}^{{\rm rad}}$ for $\gamma =p, N$. In order to calculate them, logarithmic type functions $\( \log \frac{R}{|x|} \)^\alpha \,(\alpha \not= 1)$ are important. For the Trudinger-Moser inequality, it is known that the Moser type function $\( \log \frac{R}{|x|} \)$ is important to calculate the optimal exponents even in the higher order case (ref. \cite{M, A}). 
%
In \S \ref{gap}, we explain the cause of the gap in \cite{ASant} for deriving the optimal constant $R_{k,p}^{{\text rad}}$. 
In \S \ref{Appendix}, we calculate $\nabla^k \( \log \frac{R}{|x|} \)^\alpha$
and show the (non-)compactness of the related embeddings 
of our minimization problems $R^{{\rm rad}}_{k, \gamma}$. 

We fix several notations: 
$\w_{N-1}$ denotes an area of the unit sphere $\mathbb{S}^{N-1}$ in $\re^N$. 
$X_{{\rm rad}} = \{ \, u \in X \, | \, u \,\,\text{is radial} \, \}$.
Throughout the paper, if a radial function $u$ is written as $u(x) = \tilde{u}(|x|)$ by some function $\tilde{u} = \tilde{u}(r)$, we write $u(x)= u(|x|)$ with admitting some ambiguity.

%
%

\section{A limit of the subcritical 2nd-order Rellich inequality and an observation for the critical Rellich inequality}\label{limiting form}

Recently, in a inspiring paper \cite{I} the following improved Hardy type inequality is founded. 
\begin{align}\label{IH_p}
\( \frac{N-p}{p} \)^p \int_{B_R} \frac{|u|^p}{|x|^p \( 1- \( \frac{|x|}{R} \)^{\frac{N-p}{p-1}} \)^p} \,dx  \le \int_{B_R} \left| \nabla u \cdot \frac{x}{|x|} \right|^p \,dx
\end{align}
Actually, the inequality (\ref{IH_p}) is equivalent to the classical Hardy inequality (\ref{H_p}) on $\re^N$ based on the transformation: 
\begin{align}\label{Itrans}
u(x)= w(y),\,\, {\rm where}\,\, \( |x|^{-\frac{N-p}{p-1}} - R^{-\frac{N-p}{p-1}} \)\, \frac{x}{|x|} = |y|^{-\frac{N-p}{p-1}} \, \frac{y}{|y|}
\end{align}
One of the virtues of the improved Hardy type inequality (\ref{IH_p}) is that we can take a limit of (\ref{IH_p}) as $p \nearrow N$ in the usual sense. 
This is in striking contrast with the classical Hardy inequality (\ref{H_p}). 
As a limit of (\ref{IH_p}) as $p \nearrow N$, we can obtain the critical Hardy inequality (\ref{H_N}). 
In this sense, we can say that the critical Hardy inequality (\ref{H_N}) is a limiting form of the subcritical Hardy inequality (\ref{H_p}) on $\re^N$ as $p \nearrow N$ via the equivalent inequality (\ref{IH_p}) and the transformation (\ref{Itrans}). 

\begin{remark}\label{rem trans}(Various transformations)
Various transformations which connect between two derivative norms $\| \nabla (\cdot) \|_{L^p(\Omega)}$ are considered such as (\ref{Itrans}) by \cite{F, Z, HK, I, ST, S(JDE), S(ArXiv)}. 
Actually, we can also regard these transformations as special cases or generalized cases of a transformation in a paper \cite{F}. In fact, the following transformation for $u \in W_{0, {\rm rad}}^{1,2}(B_1)$ is given in Theorem 18. in \cite{F}, where $B_1, \Omega \subset \re^2$.
\begin{align}\label{Ftrans}
w(y)= u(x),\,\, {\rm where}\,\, G_{\Omega, z} (y) = G_{B_1, 0}(x) = -\frac{1}{2\pi} \log |x|
\end{align}
and $G_{\Omega, z}(y)$ is the Green function in a domain $\Omega$, which has a singularity at $z \in \Omega$. 
We can observe that the transformation (\ref{Itrans}) is (\ref{Ftrans}) in the case where $W_{0, {\rm rad}}^{1,p}(B_1), p < N$ and $z=0, B_1 \subset \re^N = \Omega$. 
An explanation of the other transformations, see e.g. \S 2 in \cite{S(ArXiv)}. 
\end{remark}

In this section, we consider an analog of the Hardy inequality (\ref{H_p}) for the subcritical second order Rellich inequality (\ref{Rel}), that is, 
we find {\it a limiting form} of the subcritical second order Rellich inequality (\ref{Rel}) as $p \nearrow \frac{N}{2}$ only for radial functions via some transformations. 
Differently from the first order case, there is no such beautiful transformation in the second order case. 
However, from Remark \ref{rem trans}, we can expect that the below transformation (\ref{trans}) with $\alpha = \frac{N-2p}{p-1}$ is suitable. 
More generally, we shall consider the transformation (\ref{trans}) with a general exponent $\alpha$ below.  
Thanks to the transformation (\ref{trans}), we can obtain a limiting form of the subcritical second order Rellich inequality (\ref{Rel}). A little strangely, it is a first order inequality, but it is a important ingredient to show the critical Rellich inequality associated with $R^{{\rm rad}}_{2,\frac{N}{2}}, R^{{\rm rad}}_{2, N}$,
see also \S \ref{critical Rellich}.

Consider the following transformation:
\begin{align}\label{trans}
u(r)=w(t), \,\text{where} \,\,r=f(t)= \( R^{-\alpha}+ t^{-\alpha} \)^{-\frac{1}{\alpha}} \,\, \text{i.e.}\,\, r^{-\alpha} -R^{-\alpha} = t^{-\alpha}, \,t \in (0,\infty)
\end{align}
Two functions $w \in C_{{\rm rad}}^1(\re^N \setminus \{ 0\}) \,\cap\, C(\re^N), \,u \in C_{{\rm rad}}^1(B_R \setminus \{ 0\}) \,\cap\, C(B_R)$ are radial functions. 
Let $\alpha >0$. Then we have the followings.
\begin{align*}
f'(t) &= \( \frac{r}{t} \)^{\alpha +1} \\
f''(t) &= (\alpha+1 ) \( \frac{r}{t^2} \)^{\alpha +1} \( r^\alpha - t^\alpha \)\\
w'(t) &= u'(r) f'(t) \\
w''(t) &= u''(r) f'(t)^2 + u'(r) f''(t) \\
\lap w(t) 
&= f'(t)^2 \left[ u''(r) + \frac{N-1}{r} u'(r) \left\{  \frac{\alpha +1}{N-1} + \frac{N- \alpha -2}{N-1}\frac{t^\alpha}{r^\alpha} \right\} \right]
\end{align*}
Since $\frac{t^\alpha}{r^\alpha} = \frac{1}{1-  r^\alpha R^{-\alpha}}$, we define the differential operator $L_{p, \alpha}$ as follows.
\begin{align}\label{L_p}
L_{p,\alpha} u &= u''(r) + \frac{N-1}{r} u'(r) \left\{  \frac{\alpha +1}{N-1} + \frac{N- \alpha -2}{N-1}\frac{1}{1- r^\alpha R^{-\alpha}} \right\} \\
&= \lap u(r) + \frac{u'(r)}{r} \,\frac{N-\alpha -2}{\( \frac{R}{r} \)^{\alpha} -1} \notag
\end{align}
Then we have $\lap w(t)= f'(t)^2 L_{p, \alpha} u(r)$ and  
\begin{align*}
\int_{\re^N} | \lap w|^p \,dy 
&= \w_{N-1} \int_0^\infty |L_{p, \alpha} u(r)|^p f'(t)^{2p} t^{N-1} \,dt \\
&= \w_{N-1} \int_0^R |L_{p, \alpha} u(r)|^p \( 1-r^\alpha R^{-\alpha} \)^{\frac{(2p-1)(\alpha +1) +1-N}{\alpha}} r^{N-1} \,dr \\
&= \int_{B_R} | L_{p, \alpha} u|^p \( 1- \( \frac{|x|}{R}\)^\alpha \)^{\,\beta} \,dx
\end{align*}
On the other hand, we have
\begin{align*}
\int_{\re^N} \frac{|w|^p}{|y|^{2p}} dy 
&= \w_{N-1} \int_0^\infty |u(r)|^p \, t^{N-1-2p} \,dt \\
&= \w_{N-1} \int_0^\infty |u(r)|^p \, t^{N-1-2p} \( f'(t) \)^{-1}\,dr 
=\int_{B_R} \frac{|u|^p}{|x|^{2p} \( 1- \( \frac{|x|}{R}\)^\alpha \)^{\frac{N-2p +\alpha}{\alpha}} } dx
\end{align*}

Consequently, we obtain the following.

\begin{prop}
Let $1< p< \frac{N}{2}, \alpha >0$ and $\beta = \frac{(2p-1)(\alpha +1) +1-N}{\alpha}$. Then the subcritical Rellich inequality on $\re^N$ for radial functions $w$:
\begin{equation}\label{R_p}
\( \frac{N (p-1) (N-2p)}{p^2} \)^p \int_{\re^N} \frac{|w|^p}{|y|^{2p}} dy
\le \int_{\re^N} | \lap w|^p \,dy 
\end{equation}
is equivalent to the following inequality for radial functions $u$:
\begin{equation}\label{IR_p}
\( \frac{N (p-1) (N-2p)}{p^2} \)^p \int_{B_R} \frac{|u|^p}{|x|^{2p} \( 1- \( \frac{|x|}{R}\)^\alpha \)^{\frac{N-2p +\alpha}{\alpha}} } dx
\le \int_{B_R} | L_{p, \alpha} u|^p \( 1- \( \frac{|x|}{R}\)^\alpha \)^{\,\beta} \,dx
\end{equation}
under the transformation (\ref{trans}). 
\end{prop}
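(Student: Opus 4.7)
The plan is to establish the equivalence directly by verifying that the transformation (\ref{trans}) is a bijection between the relevant function classes and that each of the two integrals appearing in (\ref{R_p}) transforms into the corresponding integral in (\ref{IR_p}). Most of the bookkeeping has already been laid out in the displayed computations preceding the statement, so the work is to tie those pieces together cleanly and note that the induced map on radial functions is invertible.

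First I would record the elementary properties of $f(t)=(R^{-\alpha}+t^{-\alpha})^{-1/\alpha}$: namely that $f$ is a smooth, strictly increasing diffeomorphism from $(0,\infty)$ onto $(0,R)$, with $f(0^+)=0$ and $f(+\infty)=R$, and that the identity $t^\alpha/r^\alpha=(1-(r/R)^\alpha)^{-1}$ is a direct consequence of $r^{-\alpha}-R^{-\alpha}=t^{-\alpha}$. Consequently, the correspondence $w(t)=u(r)$ is a bijection between smooth radial functions on $\re^N\setminus\{0\}$ with $w(t)\to 0$ as $t\to\infty$ and smooth radial functions on $B_R\setminus\{0\}$ with $u(r)\to 0$ as $r\to R^-$, and it extends to the appropriate weighted Sobolev completions by density.

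Next I would carry out the chain-rule calculation. Using $f'(t)=(r/t)^{\alpha+1}$ and the expression for $f''$ already given, one combines $w''+\tfrac{N-1}{t}w'=u''(f')^2+u'\bigl(f''+\tfrac{N-1}{t}f'\bigr)$, factors $f'(t)^2$, and substitutes $t^\alpha/r^\alpha=(1-(r/R)^\alpha)^{-1}$ to produce the identity $\lap w(t)=f'(t)^2\,L_{p,\alpha}u(r)$ with $L_{p,\alpha}$ as in (\ref{L_p}). Then I would change variables in each integral via $dt=(f'(t))^{-1}\,dr$: for $\int_{\re^N}|\lap w|^p\,dy$ the weight becomes $f'(t)^{2p-1}t^{N-1}=r^{N-1}(1-(r/R)^\alpha)^\beta$ after combining powers of $r/t$ with $t^\alpha=r^\alpha/(1-(r/R)^\alpha)$, which gives the exponent $\beta=((2p-1)(\alpha+1)+1-N)/\alpha$; for $\int_{\re^N}|w|^p/|y|^{2p}\,dy$ the same substitution produces the factor $r^{N-2p-1}(1-(r/R)^\alpha)^{-(N-2p+\alpha)/\alpha}$.

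With these two identities established, (\ref{R_p}) and (\ref{IR_p}) are the same inequality written in two coordinate systems, and the equivalence is immediate. The validity of (\ref{R_p}) for radial $w$ with optimal constant $A_{2,p}^p=(N(p-1)(N-2p)/p^2)^p$ is the classical subcritical Rellich inequality (\ref{Rel}) for $k=2,m=1$, already quoted in the introduction, so no further work is needed there. The only real obstacle is careful bookkeeping of the exponents of $r$, $t$ and $1-(r/R)^\alpha$; once one arrives at the two displayed integral identities, the conclusion is purely formal. A secondary technical point to check is that the transformation indeed gives a bijection between the natural domains of the two inequalities, but this follows from the diffeomorphism property of $f$ together with a standard density argument.
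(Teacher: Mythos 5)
Your proposal is correct and follows essentially the same route as the paper. The paper's ``proof'' of this proposition is exactly the sequence of displays preceding the statement: the pointwise identity $\lap w(t)=f'(t)^2\,L_{p,\alpha}u(r)$, followed by the two change-of-variables identities $\int_{\re^N}|\lap w|^p\,dy=\int_{B_R}|L_{p,\alpha}u|^p\bigl(1-(|x|/R)^\alpha\bigr)^{\beta}dx$ and $\int_{\re^N}|w|^p|y|^{-2p}\,dy=\int_{B_R}|u|^p|x|^{-2p}\bigl(1-(|x|/R)^\alpha\bigr)^{-(N-2p+\alpha)/\alpha}dx$; your computation reproduces these and your added remark that $f$ is a diffeomorphism of $(0,\infty)$ onto $(0,R)$ (so the map $u\mapsto w$ is a bijection of the natural radial function classes) is the sensible way to make the word ``equivalent'' precise, which the paper leaves implicit.
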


\begin{remark}
If $\alpha = \frac{N-2p}{2p-1}$ and $p=1$, then we have
\begin{equation}\label{L to lap}
\int_{B_R} | L_{p, \alpha} u|^p \( 1- \( \frac{|x|}{R}\)^\alpha \)^{\,\beta} \,dx = \int_{B_R} | \lap u |^p \,dx.
\end{equation}
However, since the subcritical Rellich inequality (\ref{Rel}) does not hold for $p=1$, we exclude the case where $p=1$. Therefore, the equality (\ref{L to lap}) does not hold in general when $p>1$ .
\end{remark}

Now we take a limit of the inequality (\ref{IR_p}) as $p \nearrow \frac{N}{2}$, which is equivalent to the subcritical Rellich inequality (\ref{R_p}) on $\re^N$. If $\alpha \not\to 0$ as $p \nearrow \frac{N}{2}$, then the left-hand side of the inequality (\ref{IR_p}) becomes an indeterminate form as $p \nearrow \frac{N}{2}$. Therefore, in order to obtain a limiting form, we assume that $\alpha \to 0$ as $p \nearrow \frac{N}{2}$. Especially, we assume that $\lim_{p \nearrow \frac{N}{2}} \frac{N-2p}{\alpha} = A \in (0, \infty)$. 
Since $1-r^x \sim x \log \frac{1}{r} \,(x \to 0)$, the left-hand side of the inequality (\ref{IR_p}) is 
\begin{align*}
&\( \frac{N (p-1) (N-2p)}{p^2} \)^p \int_{B_R} \frac{|u|^p}{|x|^{2p} \( 1-\( \frac{|x|}{R}\)^{\alpha} \)^{\frac{N-2p}{\alpha} +1} } \,dx\\
&\sim \( \frac{2(N-2)}{N} \)^{\frac{N}{2}} (N-2p)^{\frac{N}{2}} \alpha^{-A-1} \int_{B_R} \frac{|u|^{\frac{N}{2}} }{ |x|^{N} \( \log \frac{R}{|x|} \)^{1+A}} \,dx \quad \( p \nearrow \frac{N}{2} \).
\end{align*}
On the other hand, in the right-hand side of the inequality (\ref{IR_p}), we have 
\begin{align*}
L_{p,\alpha} u &= u''(r) + \frac{N-1}{r} u'(r) \left\{  \frac{\alpha +1}{N-1} + \frac{N- \alpha -2}{N-1}\frac{1}{1- r^{\alpha} R^{-\alpha}} \right\} \\
&\sim \frac{(N-2)}{r \log \frac{R}{r}} u'(r) \, \alpha^{-1}  \quad \( p \nearrow \frac{N}{2} \),\\
 \(1- \( \frac{|x|}{R} \)^{\alpha} \)^{\,\beta} 
 &\sim \alpha^{N-1-A} \( \log \frac{R}{|x|} \)^{N-1-A} \quad \( p \nearrow \frac{N}{2} \).
\end{align*}
Therefore we observe that the limiting form of the inequality (\ref{IR_p}) as $p \nearrow \frac{N}{2}$ is 
\begin{align}\label{lim ineq}
\( \frac{2}{N} \,A \)^{\frac{N}{2}}  \int_{B_R} \frac{|u|^{\frac{N}{2}} }{ |x|^{N} \( \log \frac{R}{|x|} \)^{1+A}} \,dx 
\le \int_{B_R} \frac{|\nabla u|^{\frac{N}{2}} }{ |x|^{\frac{N}{2}} \( \log \frac{R}{|x|} \)^{A+1-\frac{N}{2}}} \,dx.
\end{align}

Note that the inequality (\ref{lim ineq}) is already known by \cite{MOW}. 
Consequently, we obtain the following.

\begin{prop}
We obtain the inequality (\ref{lim ineq}) as a limiting form of the subcritical Rellich inequality (\ref{R_p}) on $\re^N$ as $p \nearrow \frac{N}{2}$ via the equivalent inequality (\ref{IR_p}) and the transformation (\ref{trans}) with $\alpha$ which satisfies $\lim_{p \nearrow \frac{N}{2}} \frac{N-2p}{\alpha} = A \in (0, \infty)$.
\end{prop}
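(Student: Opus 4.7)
The plan is to make rigorous the asymptotic expansions already carried out in the text immediately preceding the statement. I would fix a radial test function $u \in C_c^\infty(B_R \setminus \{0\})$ and apply the equivalent inequality (\ref{IR_p}) to $u$ for each admissible pair $(p,\alpha)$. The core tool is the elementary expansion $1 - s^\alpha = \alpha \log(1/s) + O(\alpha^2)$, uniform on compact subsets of $(0,1)$, combined with the standing assumption $(N-2p)/\alpha \to A \in (0,\infty)$, which forces $p \to N/2$ and $\alpha \to 0$ simultaneously. From these inputs the weight on the left-hand side of (\ref{IR_p}) behaves like $\alpha^{-A-1}\bigl(\log(R/|x|)\bigr)^{-A-1}(1+o(1))$, and the subcritical Rellich constant $(N(p-1)(N-2p)/p^2)^p$ behaves like $(2(N-2)/N)^{N/2}(A\alpha)^{N/2}$, so the entire left-hand side carries an overall factor $\alpha^{N/2-A-1}$ in front of the integral appearing in (\ref{lim ineq}).

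On the right-hand side I would first isolate, among the pieces of $L_{p,\alpha}u$, the single term that survives the limit: the coefficient $\frac{N-\alpha-2}{1-r^\alpha R^{-\alpha}}$ blows up like $(N-2)\alpha^{-1}(\log(R/r))^{-1}$, whereas $u''(r)$ and the contribution from $\frac{\alpha+1}{N-1}\cdot\frac{u'}{r}$ stay of order one on the compact annulus supporting $u$. Consequently
\begin{equation*}
|L_{p,\alpha}u|^p \sim (N-2)^{N/2}\,\alpha^{-N/2}\,\frac{|u'(r)|^{N/2}}{r^{N/2}\bigl(\log(R/r)\bigr)^{N/2}},
\end{equation*}
and the weight satisfies $\bigl(1-(|x|/R)^\alpha\bigr)^\beta \sim \alpha^{N-1-A}\bigl(\log(R/|x|)\bigr)^{N-1-A}$, so the right-hand side of (\ref{IR_p}) carries the same overall factor $\alpha^{N/2-A-1}$. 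Dividing (\ref{IR_p}) through by this common factor and invoking dominated convergence---legitimate because $u$ is supported on a compact annulus in $B_R \setminus \{0\}$ where the remainder terms in all expansions are uniformly bounded and the integrands are dominated by fixed integrable functions---delivers (\ref{lim ineq}), after the numerical cancellation $(2(N-2)/N)^{N/2}A^{N/2}/(N-2)^{N/2} = (2A/N)^{N/2}$ and the identification $|\nabla u|^{N/2} = |u'(r)|^{N/2}$ for radial $u$.

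The main obstacle is the bookkeeping: one must verify that the two sides of (\ref{IR_p}) really share the same power of $\alpha$ in their leading behaviour and that the numerical constants conspire to give exactly the sharp coefficient $(2A/N)^{N/2}$, which is where the hypothesis $(N-2p)/\alpha \to A$ enters essentially. A secondary technical point is the uniform-in-$\alpha$ domination required for the passage to the limit on the right-hand side; this works because the subleading terms in $L_{p,\alpha}u$ genuinely lack the factor $\alpha^{-1}$ that drives the divergence of the dominant term, so their contribution to $|L_{p,\alpha}u|^p$, after multiplication by the weight and division by $\alpha^{N/2-A-1}$, is $o(1)$ in $L^1$. A standard density argument then extends (\ref{lim ineq}) from $C_c^\infty(B_R \setminus \{0\})$ to the full radial weighted class on which both sides are defined.
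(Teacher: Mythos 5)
Your proposal is correct and follows the same route as the paper: apply (\ref{IR_p}) to a fixed radial test function, use $1-s^\alpha \sim \alpha\log(1/s)$ together with $\frac{N-2p}{\alpha}\to A$, identify the dominant blow-up term in $L_{p,\alpha}u$, and check that the numerical constants reduce to $(2A/N)^{N/2}$. The one place your write-up is a bit loose is the claim that both sides carry the \emph{same} overall factor $\alpha^{N/2-A-1}$: the left-hand exponent is $\frac{N}{2}-\frac{N-2p}{\alpha}-1$ while the right-hand exponent is $-\frac{N}{2}+\beta = -\frac{N}{2}+(2p-1)-\frac{N-2p}{\alpha}$, and these differ by $N-2p$; one must observe that $\alpha^{N-2p}=e^{(N-2p)\ln\alpha}\to 1$ because $(N-2p)\ln\alpha\sim A\,\alpha\ln\alpha\to 0$ under the standing hypothesis, so the mismatch is harmless in the limit but is not literal equality.
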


The inequality (\ref{lim ineq}) is an important ingredient to show the critical Rellich inequality. Indeed, if we can show the inequality:
\begin{align}\label{lim ineq right}
C \int_{B_R} \frac{|\nabla u|^{\frac{N}{2}} }{ |x|^{\frac{N}{2}} \( \log \frac{R}{|x|} \)^{A+1-\frac{N}{2}}} \,dx \le \int_{B_R} |\lap u|^{\frac{N}{2}} \,dx,
\end{align}
then we can obtain the desired 2nd-order critical Rellich inequality from (\ref{lim ineq}) and (\ref{lim ineq right}). 
Actually, the inequality (\ref{lim ineq right}) holds when $A=\frac{N}{2} -1$ or $N-1$, see \S \ref{S new Hardy} and \S \ref{critical Rellich}. 
In order to prove Theorem \ref{Main thm} completely, we need more general Hardy type inequalities than (\ref{lim ineq right}) in the next section. 

\begin{remark}
If we choose $\alpha= \frac{N-2p}{p-1}$, then $\frac{N-2p+\alpha}{\alpha} = p=\beta$ and $A=\frac{N}{2} -1$. On the other hand, if we choose $\alpha = \frac{N-2p}{2p-1}$, then $\frac{N-2p+\alpha}{\alpha} = 2p, \beta =0$ and $A=N-1$.
\end{remark}

%
%

\section{Another Hardy type inequality with two singularities at the origin and the boundary}\label{S new Hardy}

In Proposition 1.2. in \cite{HK} ($a \gg 1$ case) and Proposition 1 in \cite{II(2016)} ($a=1$ case), the following generalization of the critical Hardy inequality (\ref{H_N}) to  the weighted critical Sobolev spaces $W_0^{1,p}(B_R; |x|^{p-N} \,dx)$ is investigated, where $a \ge 1$.
\begin{align}\label{weight Hardy}
\( \frac{p-1}{p} \)^p \int_{B_R} \frac{|u|^p}{|x|^N \( \log \frac{aR}{|x|} \)^p} \,dx \le \int_{B_R} \left| \nabla u \cdot \frac{x}{|x|} \right|^p |x|^{p-N} \,dx
\end{align}

We observe that the inequality (\ref{weight Hardy}) goes to the critical Hardy inequality (\ref{H_N}) as $p \nearrow N$. In this section, we investigate another generalization of the critical Hardy inequality (\ref{H_N}) in the subcritical Sobolev spaces $W_0^{1,p}(B_R)$. Our inequality has similar structures to (\ref{weight Hardy}), see p.101 in \cite{II(2016)} and Remark \ref{rem virtual}.

\begin{theorem}\label{thm new Hardy}
Let $1 < p \le N$. Then the following inequality
\begin{align}\label{new Hardy}
\( \frac{p-1}{p} \)^p \int_{B_R} \frac{|u|^p}{|x|^p \( \log \frac{R}{|x|} \)^p} \,dx \le \int_{B_R} \left| \nabla u \cdot \frac{x}{|x|} \right|^p \,dx
\end{align}
holds for any $u \in W_0^{1,p} (B_R)$. And the constant $\( \frac{p-1}{p} \)^p$ is optimal and is not attained. 
Furthermore the following improved Hardy inequality
\begin{align}\label{new IHardy}
\( \frac{p-1}{p} \)^p \int_{B_R} \frac{|u|^p}{|x|^p \( \log \frac{R}{|x|} \)^p} \,dx + \phi_{N,p} (u) \le \int_{B_R} \left| \nabla u \cdot \frac{x}{|x|} \right|^p \,dx
\end{align}
holds for any $u \in W_0^{1,p} (B_R)$, where 
\begin{align*}
\phi_{N,p} (u ) = (N-p) \( \frac{p-1}{p} \)^{p-1}\int_{B_R} \frac{|u|^p}{|x|^p \( \log \frac{R}{|x|} \)^{p-1}} \,dx.
\end{align*}
\end{theorem}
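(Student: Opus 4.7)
The strategy is to reduce the inequality to a one-dimensional weighted Hardy inequality by combining polar slicing with a logarithmic change of variable, and then prove the 1D inequality by the classical integration-by-parts method with a suitably chosen test vector field that simultaneously captures the sharp constant and the improvement term.

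Since the right-hand side only involves the radial derivative $\nabla u \cdot x/|x| = \partial_r u$, I would first slice in spherical coordinates $x = r\sigma$. By density of $C_c^\infty(B_R)$ in $W_0^{1,p}(B_R)$ and Fubini's theorem, for a.e.\ $\sigma \in S^{N-1}$ the slice $v_\sigma(r) := u(r\sigma)$ is absolutely continuous on $(0,R]$ with $v_\sigma(R) = 0$, and both sides of the inequality decompose as integrals over $S^{N-1}$ of the corresponding one-dimensional expressions. The problem reduces to establishing, for every such $v$, the 1D inequality with weight $r^{N-1}$.

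Next I apply the substitution $s := \log(R/r)$ and $w(s) := v(Re^{-s})$, which converts the 1D inequality (including the improvement term) into
\begin{equation*}
\Bigl(\tfrac{p-1}{p}\Bigr)^p \int_0^\infty \frac{|w(s)|^p}{s^p} e^{-\lambda s}\,ds + \lambda \Bigl(\tfrac{p-1}{p}\Bigr)^{p-1} \int_0^\infty \frac{|w(s)|^p}{s^{p-1}} e^{-\lambda s}\,ds \le \int_0^\infty |w'(s)|^p e^{-\lambda s}\,ds,
\end{equation*}
where $\lambda := N-p \ge 0$ and $w(0) = 0$. To prove this, I take the vector field $\phi(s) := s^{-(p-1)} e^{-\lambda s}$, for which $-\phi'(s) = (p-1) s^{-p} e^{-\lambda s} + \lambda\, s^{-(p-1)} e^{-\lambda s}$. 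Integration by parts (the boundary term at $s = 0$ vanishes because $w(0) = 0$ gives $|w(s)|^p / s^{p-1} \to 0$, and at infinity by the exponential decay of $\phi$ plus standard cut-offs) yields $\int_0^\infty |w|^p (-\phi'(s))\,ds = p \int_0^\infty |w|^{p-1}\, \mathrm{sgn}(w)\, w'(s)\, \phi(s)\,ds$. Applying H\"older with exponents $p/(p-1)$ and $p$ bounds the right-hand side by $p A^{(p-1)/p} B^{1/p}$, where $A$ and $B$ denote the weighted singular integral and the weighted derivative integral; then the $\varepsilon$-Young inequality $p A^{(p-1)/p} B^{1/p} \le (p-1)\delta A + \delta^{-(p-1)} B$ with the optimal parameter $\delta = (p-1)/p$ rearranges precisely into the improved inequality.

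For sharpness of $((p-1)/p)^p$ I would use the standard quasi-extremal $u_\epsilon(x) := \min\bigl((\log R/|x|)^{(p-1)/p + \epsilon},\,M_\epsilon\bigr)$ with a truncation near the origin, or equivalently $w_\epsilon(s) = s^{(p-1)/p + \epsilon}$ near $s = 0$, for which the Rayleigh quotient of (\ref{new Hardy}) converges to $((p-1)/p)^p$ as $\epsilon \downarrow 0$. Non-attainability follows from a short equality-case analysis: equality in the H\"older/Young step forces $w'(s) \propto w(s)/s$, hence $w(s) = c s^{(p-1)/p}$, but this profile makes the weighted singular integral diverge at $s = 0$, so no admissible minimizer can exist. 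The main technical nuisance, rather than a genuine obstacle, is the bookkeeping: choosing $\delta$ so that the improvement term emerges with precisely the constant $(N-p)((p-1)/p)^{p-1}$, and verifying that all boundary terms in the integration by parts indeed vanish for arbitrary elements of $W_0^{1,p}(B_R)$.
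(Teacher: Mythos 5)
Your argument is correct and uses the same core mechanism as the paper — test against a singular radial vector field, integrate by parts, apply H\"older, then close the estimate with a power inequality that simultaneously yields the sharp constant and the remainder — but the bookkeeping is organized differently in a way worth noting. The paper works directly in $\re^N$ with the vector field $x/(|x|^p(\log\frac{R}{|x|})^{p-1})$ and obtains the improvement via the elementary inequality $(A-B)^p\le A^p-p(A-B)^{p-1}B$ applied twice; you first slice and substitute $s=\log(R/r)$ to reduce everything to a one-variable inequality weighted by $e^{-(N-p)s}$, where the same vector-field trick becomes a plain 1D integration by parts, and you recover the remainder in a single $\varepsilon$-Young step with the optimally tuned $\delta=\frac{p-1}{p}$. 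The 1D reduction makes the dependence on $N$ entirely explicit (it enters only through $\lambda=N-p$ in the weight, and hence in the coefficient of the remainder term), and the Young route produces (\ref{new IHardy}) in one shot rather than via the intermediate inequality (\ref{p mae}); on the other hand, the paper's in-$\re^N$ presentation generalizes more immediately to the weighted variants in Theorems~\ref{thm GH}--\ref{thm lap Hardy}, where the slicing-plus-logarithm substitution would have to be redone with each new pair of weights. Your optimality test function $(\log\frac{R}{|x|})^{\frac{p-1}{p}+\varepsilon}$ truncated away from the boundary is essentially the same as the paper's $\psi_\gamma$ with $\gamma=\frac{p-1}{p}+\varepsilon$. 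For non-attainability your equality-case analysis is fine, but note that for $p<N$ the paper gets it for free from the strictly positive remainder $\phi_{N,p}(u)>0$; the equality-case argument you sketch is only really needed at $p=N$, where $\phi_{N,p}\equiv 0$ and the paper simply cites Ioku--Ishiwata --- you should check that the equality-forcing in both H\"older and Young is carried out carefully there, since that is precisely the case where your remainder term $\lambda\bigl(\frac{p-1}{p}\bigr)^{p-1}C$ vanishes and the argument cannot rely on it.
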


\begin{remark}\label{rem virtual}
The inequality (\ref{new Hardy}) is invariant under the scaling $u_\la (x) = \la^{-\frac{p-1}{p}} u\( y\),$ where $y= \( \frac{|x|}{R} \)^{\la -1} x\, (x \in B_R)$. Furthermore, in the same way as the proof in \cite{II}, we can show that the radial function $\( \log \frac{R}{|x|} \)^{\frac{p-1}{p}}$ is the virtual minimizer of 
\begin{align}\label{min P}
\( \frac{p-1}{p} \)^p = \inf_{u \in W_0^{1,p}(B_R ) \setminus \{ 0\} } \dfrac{\int_{B_R} |\nabla u |^{p} \,dx}{ \int_{B_R} \frac{|u|^p}{|x|^p \( \log \frac{R}{|x|} \)^p} \,dx}.
\end{align}
More precisely, we can show as follows. If there exists a nonnegative minimizer of (\ref{min P}), then there also exists a nonnegative radial minimizer $U$. On the other hand, if there exist two nonnegative minimizers $u, v$, then there exists $C=C(u,v) >0$ such that $u =C v$. Applying this property for $u=U$ and $v = U_\la$ implies that $C=1$ and $U=U_\la$ thanks to the scale invariance structure. From this, we observe that $U=c \( \log \frac{R}{|x|} \)^{\frac{p-1}{p}} (c >0)$. However $U \not\in W_0^{1,p}(B_R)$ which is contradiction.
\end{remark}

Here, we recall the improved Hardy type inequality (\ref{IH_p}) in \S \ref{limiting form}, which is shown by Ioku \cite{I} . 
Since for any $x \in B_R$
\begin{align*}
\frac{p-1}{N-p} \left[ 1- \( \frac{|x|}{R} \)^{\frac{N-p}{p-1}} \right]  \le \log \frac{R}{|x|},
\end{align*}
we can see that our inequality (\ref{new Hardy}) is weaker than the improved inequality (\ref{IH_p}). However, both inequalities (\ref{IH_p}), (\ref{new Hardy}) go to  the critical Hardy inequality (\ref{H_N}) as $p \nearrow N$ and also have the scale invariance structure under each scaling. Besides, the proof of our inequality (\ref{new Hardy}) is simpler and more direct since we do not use some transformation like (\ref{Itrans}).

\begin{proof}
First, we show the inequality (\ref{new Hardy}) in the similar way to \cite{TF, MOW}.  Note that 
$$
{\rm div} \( \frac{x}{|x|^\alpha \( \log \frac{R}{|x|} \)^\beta} \)
= \frac{N-\alpha}{|x|^\alpha \( \log \frac{R}{|x|} \)^\beta}
+ \frac{\beta}{|x|^\alpha \( \log \frac{R}{|x|} \)^{\beta+1}}.
$$
Now we set $\alpha = p$ and $\beta = p-1$. Then we have 
\begin{align*}
&\int_{B_R} \frac{|u|^p}{|x|^p \( \log \frac{R}{|x|} \)^p} \,dx \\
&= \frac{1}{p-1} \int_{B_R} \left[ {\rm div} \( \frac{x}{|x|^p \( \log \frac{R}{|x|} \)^{p-1}}\) -\frac{N-p}{|x|^p \( \log \frac{R}{|x|} \)^{p-1}}  \right]  |u|^p \,dx \\
&= -\frac{p}{p-1} \int_{B_R} \frac{|u|^{p-2}u \,(\nabla u \cdot x)}{|x|^p \( \log \frac{R}{|x|} \)^{p-1}} \,dx -\frac{N-p}{p-1} \int_{B_R} \frac{|u|^p}{|x|^p \( \log \frac{R}{|x|} \)^{p-1}} \,dx\\
&\le \frac{p}{p-1} \( \int_{B_R} \left| \nabla u \cdot \frac{x}{|x|} \right|^p \,dx \)^{\frac{1}{p}} \( \int_{B_R} \frac{|u|^p}{|x|^p \( \log \frac{R}{|x|} \)^p} \,dx  \)^{1-\frac{1}{p}} -\frac{N-p}{p-1} \int_{B_R} \frac{|u|^p}{|x|^p \( \log \frac{R}{|x|} \)^{p-1}} \,dx
\end{align*}
which implies that for any $u \neq 0$
\begin{align}\label{p mae}
\frac{p-1}{p}\( \int_{B_R} \frac{|u|^p}{|x|^p \( \log \frac{R}{|x|} \)^p} \,dx  \)^{\frac{1}{p}}
\le  \( \int_{B_R} \left| \nabla u \cdot \frac{x}{|x|} \right|^p \,dx \)^{\frac{1}{p}}
-\frac{N-p}{p} \frac{\int_{B_R} \frac{|u|^p}{|x|^p \( \log \frac{R}{|x|} \)^{p-1}} \,dx}{\( \int_{B_R} \frac{|u|^p}{|x|^p \( \log \frac{R}{|x|} \)^p} \,dx  \)^{\frac{p-1}{p}}}.
\end{align}
Therefore we obtain the inequality (\ref{new Hardy}) for $p \in (1, N]$. 
Set 
\begin{align*}
A= \( \int_{B_R} \left| \nabla u \cdot \frac{x}{|x|} \right|^p \,dx \)^{\frac{1}{p}}, \quad
B= \frac{N-p}{p} \frac{\int_{B_R} \frac{|u|^p}{|x|^p \( \log \frac{R}{|x|} \)^{p-1}} \,dx}{\( \int_{B_R} \frac{|u|^p}{|x|^p \( \log \frac{R}{|x|} \)^p} \,dx  \)^{\frac{p-1}{p}}}.
\end{align*}
By the fundamental inequality: $(A-B)^p \le A^p -p (A-B)^{p-1} B\,(A \ge B)$ and the inequality: 
$A-B \ge \frac{p-1}{p}\( \int_{B_R} \frac{|u|^p}{|x|^p \( \log \frac{R}{|x|} \)^p} \,dx  \)^{\frac{1}{p}}$ from (\ref{p mae}), we have
\begin{align*}
\( \frac{p-1}{p} \)^p \int_{B_R} \frac{|u|^p}{|x|^p \( \log \frac{R}{|x|} \)^p} \,dx 
&\le A^p -p(A-B)^{p-1} B\\
&\le \int_{B_R} \left| \nabla u \cdot \frac{x}{|x|} \right|^p \,dx - \phi_{N,p} (u)
\end{align*}
which implies (\ref{new IHardy}) and the non-attainability of the optimal constant $\( \frac{p-1}{p} \)^p$ in (\ref{new Hardy}) except for $p=N$. Note that the case where $p=N$ is already shown by \cite{II}. Finally, we show the optimality of the constant $\( \frac{p-1}{p} \)^p$ in (\ref{new Hardy}). For $\gamma > \frac{p-1}{p}$, set
\begin{align*}
	\psi_\gamma (x)=
	\begin{cases}
		1, \,\,\,&\text{if} \,\,\,|x| \le \frac{R}{e},  \\
		\( \log \frac{R}{|x|} \)^\gamma, &\text{if} \,\,\, \frac{R}{e} \le |x| \le R.
	\end{cases}
\end{align*}
Then we have
\begin{align*}
\( \frac{p-1}{p} \)^p 
&\le \frac{\int_{B_R} \left| \nabla \psi_\gamma \cdot \frac{x}{|x|} \right|^p \,dx}{\int_{B_R} \frac{|\psi_\gamma|^p}{|x|^p \( \log \frac{R}{|x|} \)^p} \,dx} \\
&= \frac{\gamma^p \int_{R/e}^R \( \log \frac{R}{r} \)^{(\gamma -1)p} r^{N-p-1}\,dr}{\int_0^{R/e} \( \log \frac{R}{r} \)^{-p} r^{N-p-1}\,dr + \int_{R/e}^R \( \log \frac{R}{r} \)^{(\gamma -1)p} r^{N-p-1}\,dr} \\
&= \( \frac{p-1}{p} \)^p + o(1) \quad \( \beta \to \frac{p-1}{p} \).
\end{align*}
Therefore the constant $\( \frac{p-1}{p} \)^p$ in (\ref{new Hardy}) is optimal.
\end{proof} 

More generally, we can show the following inequality (\ref{GH pre}) which includes various inequalities in the same way as the above proof. The special case where $\alpha=N-p$ is shown by \cite{MOW}. We omit the proof.

\begin{theorem}\label{thm GH}
Let $1 < p < \infty$ and $\beta \ge 1-p$. Then the following inequality
\begin{align}\label{GH pre}
\( \frac{\beta+p -1}{p} \)^p \int_{B_R} \frac{|u|^p}{|x|^{\alpha +p} \( \log \frac{R}{|x|} \)^{\beta +p}} \,dx
+ \tilde{\psi}_{N,p,\alpha, \beta} (u) \le \int_{B_R} \frac{\left| \nabla u \cdot \frac{x}{|x|} \right|^p}{|x|^{\alpha} \( \log \frac{R}{|x|} \)^{\beta}}\,dx
\end{align}
holds for any $u \in C_c^1 (B_R)$ if $\alpha \le N -p$ and holds for any $u \in C_c^1 (B_R \setminus \{ 0 \})$ if $\alpha > N -p$, where 
\begin{align*}
\tilde{\psi}_{N,p, \alpha, \beta} (u ) = (N-p-\alpha) \( \frac{\beta +p-1}{p} \)^{p-1}\int_{B_R} \frac{|u|^p}{|x|^{p+\alpha} \( \log \frac{R}{|x|} \)^{\beta+ p-1}} \,dx.
\end{align*}
\end{theorem}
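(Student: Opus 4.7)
The plan is to mirror the proof of Theorem~\ref{thm new Hardy}, replacing the test vector field used there by its weighted analogue
\[
V := \frac{x}{|x|^{\alpha+p}\(\log\frac{R}{|x|}\)^{\beta+p-1}}.
\]
A direct computation gives
\[
\mathrm{div}\,V = \frac{N-\alpha-p}{|x|^{\alpha+p}\(\log\frac{R}{|x|}\)^{\beta+p-1}} + \frac{\beta+p-1}{|x|^{\alpha+p}\(\log\frac{R}{|x|}\)^{\beta+p}}.
\]
The support hypothesis on $u$ is dictated by the singularity of $V$ at the origin: when $\alpha>N-p$ the field is non-integrable there, forcing $u\in C_c^1(B_R\setminus\{0\})$, whereas for $\alpha\le N-p$ the standard $u\in C_c^1(B_R)$ suffices for the divergence theorem to yield no boundary contribution. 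Multiplying $\mathrm{div}\,V$ by $|u|^p$, integrating over $B_R$, and applying H\"older's inequality with exponents $p$ and $p/(p-1)$ to the resulting gradient term (splitting the weight so that the $p$-exponent factor rebuilds $\mathcal{A}^p$ and the $p/(p-1)$-exponent factor rebuilds $X$) yields the intermediate estimate
\[
c\,X^{1/p} + B \le \mathcal{A},
\]
where $c:=(\beta+p-1)/p$, $B:=\tfrac{N-\alpha-p}{p}\,Y/X^{(p-1)/p}$, and $X$, $Y$, $\mathcal{A}$ denote $\int_{B_R}\tfrac{|u|^p}{|x|^{\alpha+p}(\log\frac{R}{|x|})^{\beta+p}}\,dx$, $\int_{B_R}\tfrac{|u|^p}{|x|^{\alpha+p}(\log\frac{R}{|x|})^{\beta+p-1}}\,dx$, and $\(\int_{B_R}\tfrac{|\nabla u\cdot x/|x||^p}{|x|^{\alpha}(\log\frac{R}{|x|})^{\beta}}\,dx\)^{1/p}$, respectively.

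The target inequality (\ref{GH pre}) then rewrites as $c^pX + \tilde\psi_{N,p,\alpha,\beta}(u)\le\mathcal{A}^p$, and I extract it from the intermediate estimate by a convexity argument. If $cX^{1/p}+B\ge 0$, raising both sides to the $p$-th power and applying the tangent-line bound for the convex function $t\mapsto t^p$ at the positive base point $t=cX^{1/p}$ gives
\[
\mathcal{A}^p \ge (cX^{1/p}+B)^p \ge c^pX + p\,c^{p-1}X^{(p-1)/p}B = c^pX + \tilde\psi_{N,p,\alpha,\beta}(u),
\]
which is the required bound. The complementary case $cX^{1/p}+B<0$ can occur only for $\alpha>N-p$; there, multiplying by $c^{p-1}X^{(p-1)/p}>0$ yields $c^pX + \tilde\psi/p < 0$, so $c^pX + \tilde\psi < (1-p)c^pX < 0 \le \mathcal{A}^p$ (using $p>1$) and the inequality is trivial. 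When $\alpha\le N-p$ one may equivalently follow the proof of Theorem~\ref{thm new Hardy} via the convex bound $(\mathcal{A}-B)^p\le\mathcal{A}^p - p(\mathcal{A}-B)^{p-1}B$ together with $(\mathcal{A}-B)^{p-1}\ge c^{p-1}X^{(p-1)/p}$, but the convexity argument above treats both sign regimes uniformly.

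I expect the main obstacle to be the sign bookkeeping in the final convexity step when $N-\alpha-p<0$: the improvement term $\tilde\psi$ itself becomes negative, the monotonicity trick from Theorem~\ref{thm new Hardy}'s proof fails (because multiplying $(\mathcal{A}-B)^{p-1}\ge c^{p-1}X^{(p-1)/p}$ by a negative $B$ reverses the direction), and one must either choose the tangent-line vertex more carefully as above or verify the trivial subcase; once this is dispatched, the result holds uniformly over all $(\alpha,\beta)$ in the stated range.
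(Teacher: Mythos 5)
Your proposal is correct and follows the template the paper explicitly points to (the proof of Theorem~\ref{thm new Hardy}), so the overall route is the same: take the vector field $V=\tfrac{x}{|x|^{\alpha+p}(\log\tfrac{R}{|x|})^{\beta+p-1}}$, compute $\mathrm{div}\,V$, integrate against $|u|^p$, split by H\"older with exponents $p$ and $p/(p-1)$ so that the weights reassemble into $\mathcal{A}$ and $X$, and finish with the convexity bound on $t\mapsto t^p$. The support hypothesis you isolate ($u$ vanishing near the origin when $\alpha>N-p$ so that the flux term is killed) is exactly the right reason for the stated dichotomy.

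Where you genuinely add value is in the sign bookkeeping for $\alpha>N-p$, i.e. when $B<0$. The paper dismisses the proof with ``we omit the proof'' and a reference to Theorem~\ref{thm new Hardy}, but that argument uses $(\mathcal{A}-B)^{p-1}\ge c^{p-1}X^{(p-1)/p}$ together with multiplication by $B$, and for $B<0$ the multiplication reverses the inequality, so the literal transcription does not close the argument. Your fix --- applying the tangent-line bound at the base point $cX^{1/p}>0$ rather than at $\mathcal{A}-B$, and handling the subcase $cX^{1/p}+B<0$ by showing $c^pX+\tilde\psi<(1-p)c^pX\le 0\le\mathcal{A}^p$ --- is a clean, uniform treatment. (You should make sure to also dispatch the degenerate endpoint $\beta=1-p$, where $c=0$ and $\tilde\psi\equiv 0$, so the target is just $0\le\mathcal{A}^p$; that is trivial but worth a sentence.) In short: same proof skeleton, but your version is complete where the paper's implicit argument has a gap in the $\alpha>N-p$ regime.
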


In the case where $\alpha = N-p$, the remainder term $\tilde{\psi}_{N,p,\alpha, \beta} (u)$ is zero. Therefore we give a remainder term of the inequality (\ref{GH pre}) only in this case.

\begin{theorem}\label{thm IGH}
Let $1 < p < \infty$ and $\beta \ge 1-p$. Then the inequality
\begin{align}\label{IGH}
\( \frac{\beta+p -1}{p} \)^p \int_{B_R} \frac{|u|^p}{|x|^{N} \( \log \frac{R}{|x|} \)^{\beta +p}} \,dx
+ \phi_{N,p, \beta} (u) \le \int_{B_R} \frac{\left| \nabla u \cdot \frac{x}{|x|} \right|^p}{|x|^{N-p} \( \log \frac{R}{|x|} \)^{\beta}}\,dx
\end{align}
holds for any $u \in C_c^1 (B_R)$, where $C_{p,N}$ depends only on $p$ and $N$, and 
\begin{align*}
\phi_{N,p, \beta} (u ) =
\begin{cases}
C_{p,N} \int_{B_R} \frac{\( \log \frac{R}{|x|} \)^{p-1}}{|x|^{N-p}} \left| \nabla \( \frac{u(x)}{\( \log \frac{R}{|x|} \)^{\frac{\beta +p-1}{p}}} \) \cdot \frac{x}{|x|} \right|^p \,dx \quad \text{if}\,\, p \in [2, \infty) \\
C_{p,N} \( \int_{B_R} \frac{\( \log \frac{R}{|x|} \)^{p-1}}{|x|^{N-p}} \left| \nabla \( \frac{u(x)}{\( \log \frac{R}{|x|} \)^{\frac{\beta +p-1}{p}}} \) \cdot \frac{x}{|x|} \right|^p \,dx \)^{\frac{2}{p}} 
\( \int_{B_R} \frac{\( \log \frac{R}{|x|} \)^{-\beta}}{|x|^{N-p}} \left| \nabla u \cdot \frac{x}{|x|} \right|^p \,dx \)^{-\frac{2-p}{p}} \\
\hspace{17em}\text{if}\,\, p \in (1, 2) 
\end{cases}
\end{align*}
\end{theorem}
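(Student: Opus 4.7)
The plan is to unfold (\ref{IGH}) by peeling off the virtual minimizer $L^{c_0}$, where $L(x):=\log(R/|x|)$ and $c_0:=(\beta+p-1)/p$, and setting $v:=u/L^{c_0}$. If $\beta=1-p$ then $c_0=0$, the main term in (\ref{IGH}) vanishes, and the inequality reduces to Theorem~\ref{thm GH} with $\alpha=N-p$ and $C_{p,N}=0$; so I may assume $c_0>0$. A direct product-rule computation yields the decomposition
\[
\nabla u \cdot \frac{x}{|x|}
= L^{c_0}\,\nabla v \cdot \frac{x}{|x|} \;-\; \frac{c_0}{|x|}\,L^{c_0-1}\,v
\;=:\; a+b,
\]
and I write $W(x) := |x|^{p-N} L(x)^{-\beta}$ for the natural weight. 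The key pointwise ingredient will be a Clarkson--Simon type expansion: for $p\ge 2$,
\[
|a+b|^p \ge |b|^p + p\,|b|^{p-2}\,b\,a + c_p\,|a|^p,
\]
while for $1<p<2$,
\[
|a+b|^p \ge |b|^p + p\,|b|^{p-2}\,b\,a + c_p\,\frac{|a|^2}{(|a|+|b|)^{2-p}},
\]
with $c_p>0$ depending only on $p$ (both follow from standard monotonicity/convexity estimates for $|\cdot|^p$).

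Multiplying by $W$ and integrating over $B_R$ splits the right-hand side of (\ref{IGH}) into three contributions. For the leading term, cancelling the $L$-exponents (one checks $c_0 p - p - \beta = -1$) gives
\[
\int_{B_R} |b|^p\,W\,dx \;=\; c_0^{\,p}\int_{B_R}\frac{|u|^p}{|x|^N L^{\beta+p}}\,dx,
\]
which is precisely the main term of (\ref{IGH}). A short algebraic simplification shows that the cross term is an exact divergence,
\[
p\,|b|^{p-2}\,b\,a \cdot W \;=\; -\,c_0^{\,p-1}\,\nabla(|v|^p)\cdot\frac{x}{|x|^N},
\]
and since $\mathrm{div}(x/|x|^N)=0$ on $B_R\setminus\{0\}$, integrating by parts on $B_R\setminus B_\varepsilon$ with $u\in C_c^1(B_R)$ kills the outer boundary contribution; the inner boundary term reduces to a spherical average of $|v|^p$ on $\partial B_\varepsilon$, which tends to $0$ as $\varepsilon\to 0$ because $c_0>0$ forces $v(x)\to 0$ as $|x|\to 0$. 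Thus the cross term contributes nothing.

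Finally, one identifies the gain with $\phi_{N,p,\beta}(u)$. For $p\ge 2$ a direct exponent count gives $|a|^p\,W = L^{p-1}|x|^{p-N}|\nabla v\cdot x/|x||^p$, so $c_p\int|a|^p\,W\,dx$ is exactly the stated form with $C_{p,N}=c_p$. For $1<p<2$ I would write $|a|^p = (|a|^2/(|a|+|b|)^{2-p})^{p/2}(|a|+|b|)^{p(2-p)/2}$ and apply H\"older with exponents $2/p$ and $2/(2-p)$ against the measure $W\,dx$, obtaining
\[
\int_{B_R}\frac{|a|^2}{(|a|+|b|)^{2-p}}\,W\,dx
\;\ge\;\frac{\bigl(\int_{B_R}|a|^p\,W\,dx\bigr)^{2/p}}{\bigl(\int_{B_R}(|a|+|b|)^p\,W\,dx\bigr)^{(2-p)/p}}.
\]
Combining $(|a|+|b|)^p \le C_p(|a+b|^p+|b|^p)$ with the non-improved Hardy bound $\int|b|^p W \le \int|a+b|^p W$ (which is Theorem~\ref{thm GH} with $\alpha=N-p$) controls the denominator by a constant times $\int|a+b|^p W\,dx = \int|\nabla u \cdot x/|x||^p|x|^{p-N} L^{-\beta}\,dx$, so the numerator and denominator match exactly the two factors of $\phi_{N,p,\beta}(u)$. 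The main obstacle is the vanishing of the cross term: because $x/|x|^N$ is singular at the origin, the divergence identity demands a genuine cut-off argument, and the hypothesis $c_0>0$ is precisely what forces the $\partial B_\varepsilon$ term to be $o(1)$. The $1<p<2$ case is also delicate since one must land on exactly the scale-invariant H\"older ratio that appears in the statement of $\phi_{N,p,\beta}$, which is what dictates its somewhat unusual form.
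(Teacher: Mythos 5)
Your proposal is correct and is essentially the paper's own route. The paper explicitly says the proof is that of Theorem~1 in the cited Ioku--Ishiwata paper with the transformation changed to $v=\bigl(\log\frac{R}{|x|}\bigr)^{-\frac{\beta+p-1}{p}}u$, which is exactly your substitution $v=u/L^{c_0}$ with the ensuing Clarkson-type pointwise expansion, cancellation of the cross term by a divergence identity (together with the observation $v(0)=0$ to kill the inner boundary term), and the H\"older step to reach the stated $\phi_{N,p,\beta}$ for $1<p<2$.
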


The proof of Theorem \ref{thm IGH} is same as it of Theorem 1 in \cite{II(2016)}, which is the case where $\beta = 0$. In their proof, they used the following transformation for $u$.
$$
v(x)= \( \log \frac{R}{|x|} \)^{-\frac{p-1}{p}} u(x)
$$
In order to show Theorem \ref{thm IGH}, it is enough to change it to the following transformation.
$$
v(x)= \( \log \frac{R}{|x|} \)^{-\frac{\beta +p-1}{p}} u(x)
$$
Note that $v(0)=0$ without $u(0)=0$. 
We omit the proof of Theorem \ref{thm IGH}. 

From Theorem \ref{thm GH} and Theorem \ref{thm IGH}, we obtain the following.

\begin{cor}\label{cor IGH}
Let $1 < p < \infty, \alpha \le N -p$ and $\beta \ge 1-p$. Then the following inequality
\begin{align}\label{GH}
\( \frac{\beta+p -1}{p} \)^p \int_{B_R} \frac{|u|^p}{|x|^{\alpha +p} \( \log \frac{R}{|x|} \)^{\beta +p}} \,dx
+ \psi_{N,p,\alpha, \beta} (u) \le \int_{B_R} \frac{\left| \nabla u \cdot \frac{x}{|x|} \right|^p}{|x|^{\alpha} \( \log \frac{R}{|x|} \)^{\beta}}\,dx
\end{align}
holds for any $u \in C_c^1 (B_R)$, where 
\begin{align*}
\psi_{N,p, \alpha, \beta} (u) = 
\begin{cases}
\tilde{\psi}_{N,p, \alpha, \beta} (u ) \quad &\text{if}\,\, \alpha < N-p,\\
\phi_{N,p, \beta} (u ) &\text{if}\,\, \alpha = N-p.
\end{cases}
\end{align*}
\end{cor}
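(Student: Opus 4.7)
The plan is to dispose of Corollary \ref{cor IGH} by a direct case split on the sign of $N - p - \alpha$, invoking Theorem \ref{thm GH} in the strict case and Theorem \ref{thm IGH} in the borderline case. A preliminary observation is that the admissible class in the corollary, $u \in C_c^1(B_R)$, matches exactly the class in Theorem \ref{thm GH} under the hypothesis $\alpha \le N-p$; had $\alpha > N-p$ been allowed, one would be restricted to $C_c^1(B_R \setminus \{0\})$. Hence no compatibility issue between function classes arises.

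In the first case $\alpha < N - p$, the coefficient $N - p - \alpha$ appearing in $\tilde{\psi}_{N,p,\alpha,\beta}$ is strictly positive, so Theorem \ref{thm GH} applies verbatim and yields the inequality with $\psi_{N,p,\alpha,\beta}(u) = \tilde{\psi}_{N,p,\alpha,\beta}(u)$. In the borderline case $\alpha = N - p$, the remainder $\tilde{\psi}_{N,p,\alpha,\beta}(u)$ vanishes identically, so the conclusion of Theorem \ref{thm GH} loses its improvement. To recover a nontrivial remainder at this critical weight, I invoke Theorem \ref{thm IGH}, which supplies the stronger remainder $\phi_{N,p,\beta}(u)$ in exactly this situation. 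Putting the two cases together gives the corollary.

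Because the deduction is merely a case split, the corollary itself presents no real obstacle; the substance lies in the two underlying theorems. The core difficulty there is that at $\alpha = N - p$ the elementary divergence-and-H\"older argument powering Theorem \ref{thm GH} (in the spirit of the already-displayed proof of Theorem \ref{thm new Hardy}, based on computing ${\rm div}(x / (|x|^{\alpha+p} (\log \frac{R}{|x|})^{\beta+p-1}))$, multiplying by $|u|^p$, integrating by parts, and applying H\"older) produces a remainder whose prefactor $(N - p - \alpha)$ is exactly zero, losing information precisely at the critical weight. Overcoming this is the role of the logarithmic substitution $v(x) = (\log \frac{R}{|x|})^{-(\beta + p - 1)/p}\, u(x)$ used in Theorem \ref{thm IGH}, which absorbs the critical singular factor into the derivative and then allows the fundamental inequality $(A - B)^p \le A^p - p(A - B)^{p-1} B$ (with its refined counterpart for $1 < p < 2$) to extract the nontrivial remainder $\phi_{N,p,\beta}$.
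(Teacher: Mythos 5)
Your proof is correct and matches the paper's approach exactly: the corollary is the immediate combination of Theorem \ref{thm GH} (for $\alpha < N-p$, where the remainder $\tilde{\psi}_{N,p,\alpha,\beta}$ has positive prefactor) and Theorem \ref{thm IGH} (for $\alpha = N-p$, where $\tilde{\psi}$ vanishes and the substitution-based remainder $\phi_{N,p,\beta}$ replaces it). One small aside: your closing remark that the proof of Theorem \ref{thm IGH} proceeds by the fundamental inequality $(A-B)^p \le A^p - p(A-B)^{p-1}B$ after the logarithmic substitution does not reflect what the paper indicates (it refers to \cite{II(2016)}, whose argument produces a remainder phrased as a gradient of $v = (\log\frac{R}{|x|})^{-(\beta+p-1)/p}u$, not the $\phi_{N,p}$-type term of Theorem \ref{thm new Hardy}), but this is commentary on an underlying theorem and does not affect the validity of your derivation of the corollary.
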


From Theorem \ref{thm GH}, we have the following inequality.

\begin{theorem}\label{thm lap Hardy}
Let $1 < p < \infty$ and $\beta \ge 1-p$. Then the following inequality
\begin{align}\label{lap Hardy}
\( \frac{\beta+p -1}{p} \)^p \int_{B_R} \frac{|\nabla u|^p}{|x|^{\alpha +p} \( \log \frac{R}{|x|} \)^{\beta +p}} \,dx
\le \int_{B_R} \frac{|\lap u|^p}{|x|^{\alpha} \( \log \frac{R}{|x|} \)^{\beta}}\,dx
\end{align}
holds for any radial functions $u \in C_{c, {\rm rad}}^2 (B_R)$. 
Especially, if $p=2, \alpha =0$ and $\beta = 0$, then the inequality 
\begin{align}\label{lap Hardy2}
\frac{1}{4} \int_{B_R} \frac{|\nabla u|^2}{|x|^2 \( \log \frac{R}{|x|} \)^2} \,dx 
+ \frac{N-2}{2} \int_{B_R} \frac{|\nabla u|^2}{|x|^2 \( \log \frac{R}{|x|} \)} \,dx
\le \int_{B_R} |\lap u |^2 \,dx 
\end{align}
holds for any functions $u \in C_c^2 (B_R)$.
\end{theorem}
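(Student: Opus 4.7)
The plan is to apply Theorem \ref{thm GH} to a first-order derivative of $u$, and then pass from $|u''|^p$ to $|\Delta u|^p$ via a weighted one-sided comparison. For $u \in C^2_{c,{\rm rad}}(B_R)$, set $v(x) := u'(|x|)$. Since radial smoothness forces $u'(0)=0$, the function $v$ is compactly supported in $B_R$, continuous on $B_R$, and of class $C^1$ on $B_R \setminus \{0\}$. For this radial $v$ one has the identifications $|v(x)| = |\nabla u(x)|$ and $\nabla v(x) \cdot x/|x| = u''(|x|)$. Applying Theorem \ref{thm GH} to $v$ with the same parameters $(\alpha,\beta)$, and dropping the nonnegative remainder $\tilde\psi_{N,p,\alpha,\beta}(v)$ (which has the favorable sign when $\alpha \le N-p$), yields
\begin{equation*}
\left(\frac{\beta+p-1}{p}\right)^{p} \int_{B_R} \frac{|\nabla u|^p}{|x|^{\alpha+p}\bigl(\log\tfrac{R}{|x|}\bigr)^{\beta+p}}\,dx \;\le\; \int_{B_R} \frac{|u''(|x|)|^p}{|x|^{\alpha}\bigl(\log\tfrac{R}{|x|}\bigr)^{\beta}}\,dx.
\end{equation*}

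The remaining step is the weighted comparison $\int_{B_R} |u''(|x|)|^p W(|x|)\,dx \le \int_{B_R} |\Delta u|^p W(|x|)\,dx$ with $W(r) = r^{-\alpha}(\log R/r)^{-\beta}$, valid for radial $u$. For $p=2$ this is elementary: expanding $|\Delta u|^2 = |u''|^2 + \tfrac{2(N-1)}{r}u''u' + \tfrac{(N-1)^2}{r^2}|u'|^2$ and converting the cross term via $2u''u' = \bigl((u')^2\bigr)'$, a single integration by parts against the radial weight leaves a nonnegative residual in $(u')^2$ under the sign conditions on $(\alpha,\beta)$. For general $p > 1$, the convex lower bound $|a+b|^p \ge |a|^p + p|a|^{p-2}a\,b$ applied with $a = u''$, $b = \tfrac{N-1}{r}u'$ reduces the matter to showing
\begin{equation*}
p(N-1)\int_{B_R} \frac{|u''|^{p-2} u''\,u'}{|x|}\, W(|x|)\,dx \;\ge\; 0,
\end{equation*}
which requires an integration by parts with a carefully chosen primitive of $W/|x|$.

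For the special case $p=2$, $\alpha = \beta = 0$, and general $u \in C^2_c(B_R)$, I would instead bypass the auxiliary $v$ and apply the improved Hardy inequality (\ref{new IHardy}) of Theorem \ref{thm new Hardy} to each $\partial_i u \in C^1_c(B_R)$ and sum over $i=1,\ldots,N$. The LHS telescopes into the desired $\tfrac{1}{4}\int_{B_R}\tfrac{|\nabla u|^2}{|x|^2(\log R/|x|)^2}\,dx + \tfrac{N-2}{2}\int_{B_R}\tfrac{|\nabla u|^2}{|x|^2\log R/|x|}\,dx$, while the RHS becomes $\sum_i \int_{B_R}\left|\nabla(\partial_i u)\cdot \tfrac{x}{|x|}\right|^2 dx = \int_{B_R}\left|D^2 u \cdot \tfrac{x}{|x|}\right|^2 dx$. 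Pointwise Cauchy--Schwarz gives $\left|D^2 u \cdot \tfrac{x}{|x|}\right|^2 \le |D^2 u|_F^2 = \sum_{i,j}(\partial_{ij}u)^2$, and the classical identity $\int_{B_R}|D^2 u|_F^2\,dx = \int_{B_R}|\Delta u|^2\,dx$ for $u\in C^2_c(B_R)$ (two integrations by parts) closes the argument. The main obstacle is the general-$p$ weighted comparison: for $p=2$ it reduces to an elementary IBP because $2u''u' = ((u')^2)'$, but for $p \ne 2$ the cross term $|u''|^{p-2}u''u'/r$ is not an exact derivative, and establishing its weighted nonnegativity under the admissible range $\beta \ge 1-p$ and the relevant range of $\alpha$ is the delicate technical point.
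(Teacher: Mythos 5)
Your reduction via Theorem \ref{thm GH} applied to $v = u'$ is fine as far as it goes, but the second step --- the weighted comparison $\int_{B_R}|u''(|x|)|^p W(|x|)\,dx \le \int_{B_R}|\lap u|^p W(|x|)\,dx$ with $W(r)=r^{-\alpha}(\log \tfrac{R}{r})^{-\beta}$ --- is not just a delicate technicality: it is \emph{false} in the parameter range of the theorem, already for $p=2$. Writing $|\lap u|^2 = |u''|^2 + 2(N-1)\tfrac{u''u'}{r} + (N-1)^2\tfrac{(u')^2}{r^2}$ and integrating the cross term by parts, the quantity that must be nonnegative for your comparison to hold is
\begin{equation*}
(N-1)\Bigl[(\alpha+1)\int_0^R (u')^2 \, r^{N-3-\alpha}\Bigl(\log\tfrac{R}{r}\Bigr)^{-\beta}dr \;-\; \beta\int_0^R (u')^2\, r^{N-3-\alpha}\Bigl(\log\tfrac{R}{r}\Bigr)^{-\beta-1}dr\Bigr].
\end{equation*}
Take $\alpha = 0$, $\beta = 1$ (allowed since $\beta \ge 1-p = -1$) and choose a radial $u\in C^2_c(B_R)$ whose $u'$ is supported in an annulus $R(1-2\delta)<r<R(1-\delta)$ with $\delta$ small, so that $\log(R/r)\in(0,1)$ on the support; then $(\log R/r)^{-1}<(\log R/r)^{-2}$ pointwise there, the bracketed difference is strictly negative, and hence $\int|\lap u|^2 W < \int |u''|^2 W$. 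So your chain cannot close, and no choice of integration by parts will rescue it for general $p$ either.

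The paper avoids the comparison entirely. Instead of applying (\ref{GH}) to $v=u'$ and then comparing $u''$ to $\lap u$, it writes $\int_{B_R}\tfrac{|\lap u|^p}{|x|^{\alpha}(\log R/|x|)^\beta}dx = \w_{N-1}\int_0^R |(r^{N-1}u')'|^p r^{-\alpha-(N-1)p+N-1}(\log R/r)^{-\beta}dr$, using the exact radial identity $r^{N-1}\lap u = (r^{N-1}u')'$, and then applies the one-dimensional Hardy step behind Theorem \ref{thm GH} to $w(r)=r^{N-1}u'(r)$ with $\alpha$ shifted to $\alpha+(N-1)p$. The right-hand side is then $\int|\lap u|^p W$ on the nose, so no comparison lemma is needed and the sign issue you identified never arises. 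Your treatment of the special case $p=2$, $\alpha=\beta=0$ (apply the improved inequality (\ref{new IHardy}) to each $\partial_i u$ and use $\sum_i\int|\nabla\partial_i u|^2=\int|\lap u|^2$) does match the paper's argument.
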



\begin{proof}(Proof of Theorem \ref{thm lap Hardy})
First we assume that $u$ is a radial function. By Theorem \ref{thm GH}, 
we have
\begin{align*}
&\int_{B_R} \frac{|\lap u |^p}{|x|^{\alpha} \( \log \frac{R}{|x|} \)^{\beta}} \,dx \\
&= \w_{N-1} \int_0^R |u'' + (N-1) r^{-1} u'|^p \,r^{-\alpha +N-1} \( \log \frac{R}{r}\)^{-\beta}\,dr \\
&= \w_{N-1} \int_0^R | (r^{N-1} u' )' |^p \,r^{-\alpha -(N-1)p + N-1}\( \log \frac{R}{r} \)^{-\beta}  \,dr \\
&\ge \( \frac{\beta+ p-1}{p} \)^p \w_{N-1} \int_0^R |r^{N-1} u'|^p \,r^{-\alpha -Np + N-1} \( \log \frac{R}{r}\)^{-\beta-p} \,dr + \psi_{N,p, \alpha + (N-1)p, \beta} (r^{N-1} u')\\
&=\( \frac{\beta+ p-1}{p} \)^p \int_{B_R} \frac{|\nabla u|^p}{|x|^{\alpha + p} \( \log \frac{R}{|x|} \)^{\beta +p}} \,dx + (N-\alpha -Np) \( \frac{\beta+ p-1}{p} \)^{p-1} \int_{B_R} \frac{|\nabla u|^p}{|x|^{p+\alpha} \( \log \frac{R}{|x|} \)^{\beta +p-1}} \,dx.
\end{align*}
Next we assume $p=2, \alpha =0$ and $\beta = 0$. By Theorem \ref{thm new Hardy}, we can show (\ref{lap Hardy2}) without radially symmetry as follows.
\begin{align*}
\int_{B_R} |\lap u |^2 \,dx 
&=  \sum_{i=1}^N \int_{B_R} |\nabla u_{x_i} |^2\,dx \\
&\ge \( \frac{1}{2} \)^2  \sum_{i=1}^N \int_{B_R} \frac{|u_{x_i}|^2}{|x|^{2} \( \log \frac{R}{|x|} \)^{2}} \,dx + \phi_{N,2} (u_{x_i})\\
&= \frac{1}{4} \int_{B_R} \frac{|\nabla u|^2}{|x|^2 \( \log \frac{R}{|x|} \)^2} \,dx 
+ \frac{N-2}{2} \int_{B_R} \frac{|\nabla u|^2}{|x|^2 \( \log \frac{R}{|x|} \)} \,dx
\end{align*}


\end{proof}

%
%

\section{Critical Rellich inequalities: proof of Theorem \ref{Main thm}}\label{critical Rellich}

In this section, we show Theorem \ref{Main thm}. In Part I, we show the positivity and the attainability of $R_{k,\gamma}^{{\rm rad}}$. In Part II, we give the explicit values and the non-attainability of the optimal constant $R_{k,\gamma}^{{\rm rad}}$ for $\gamma =p, N$. 

\subsection{Inequality: Part I on the proof of Theorem \ref{Main thm}}\label{Inequality}

In this subsection, we show the lower bounds of $R_{k,p}^{{\rm rad}}$ and $R_{k,N}^{{\rm rad}}$ which implies that $R_{k,\gamma}^{{\rm rad}} > 0$ for $\gamma \in [p, N]$. Especially, we show 
\begin{align}\label{lower est}
R^{{\rm rad}}_{k, p}\ge 
\begin{cases}
\( \frac{N-k}{kN} \prod_{j=1}^m 2j \,(N-2j)\, \)^p &\text{if}\,\, k=2m, \\
\( \frac{N-k}{N} \prod_{j=1}^m 2j \,(N-2j) \,\)^p &\text{if}\,\, k=2m+1,
\end{cases}\,\,\,\,
R^{{\rm rad}}_{k, N}\ge \( \prod_{j=1}^k \frac{jN-k}{N} \)^p.
\end{align} 

More generally, we show the followings.
\begin{theorem}\label{gene Main}
(I) If $\alpha \le N-2mp$, then the following inequality holds for any radial functions $u \in C_c^{2m}(B_R)$.
\begin{align}\label{gene e bdy}
\int_{B_R} \frac{|\lap^m u|^p}{|x|^\alpha}\,dx 
\ge \( \prod_{j=1}^{2m} \frac{jp -1}{p} \)^p &\int_{B_R} \frac{|u|^p}{|x|^{\alpha +2mp} \( \log \frac{R}{|x|} \)^{2mp}} \notag \\
&+ \( \prod_{j=1}^{2m-1} \frac{jp -1}{p} \)^p \psi_{N,p, \alpha + (2m-1)p, (2m-1)p} (u),
\end{align}
where $\psi_{N,p, \alpha, \beta} (u)$ is given by Corollary \ref{cor IGH}. 

\noindent
(II) If $2(1-p) < \alpha \le N-2mp$, then the following inequality holds for any radial functions $u \in C_c^{2m}(B_R)$. 
\begin{align}
\label{gene e ori}
\int_{B_R} \frac{|\lap^m u|^p}{|x|^\alpha}\,dx 
\ge &D(N,m,p,\alpha)^p \int_{B_R} \frac{|u|^p}{|x|^{\alpha +2mp} \( \log \frac{R}{|x|} \)^{p}} \notag\\
&+ \( \frac{Np-N+\alpha +2(m-1)p}{p \,C(N, m-1,p,2mp+\alpha)} \)^p \psi_{N, p, \alpha+(2m -1)p, 0} (u),
\end{align}
where $C(N,m,p,\beta)$ is given by Theorem \ref{DH lemma} and $D(N,m,p,\alpha)$ is given by 
\begin{align*}
D(N,m,p,\alpha)
= \left[ \prod_{j=1}^{m-1} \frac{(2pj + N-2mp -\alpha) \left\{ p(N-2-2j) -N +2mp+\alpha \right\}}{p^2}  \right] \\
\cdot \frac{(p-1)\left\{ (N-2)p -N +2mp +\alpha \right\} }{p^2}.
\end{align*}
(III) If $\alpha \le N-(2m+1)p$, then the following inequality holds for any radial functions $u \in C_c^{2m+1}(B_R)$.
\begin{align}\label{gene o bdy}
\int_{B_R} \frac{|\nabla \lap^m u|^p}{|x|^\alpha}\,dx 
\ge \( \prod_{j=1}^{2m+1} \frac{jp -1}{p} \)^p &\int_{B_R} \frac{|u|^p}{|x|^{\alpha +(2m+1)p} \( \log \frac{R}{|x|} \)^{(2m+1)p}}\notag \\
&+ \( \prod_{j=1}^{2m} \frac{jp -1}{p} \)^p \psi_{N,p, \alpha + 2mp, 2mp} (u).
\end{align}
(IV) If $2-3p < \alpha \le N-(2m+1)p$, then the following inequality holds for any radial functions $u \in C_c^{2m+1}(B_R)$. 
\begin{align}
\label{gene o ori}
\int_{B_R} \frac{|\nabla \lap^m u|^p}{|x|^\alpha}\,dx 
&\ge E(N,m,p,\alpha)^p \int_{B_R} \frac{|u|^p}{|x|^{\alpha +(2m+1)p} \( \log \frac{R}{|x|} \)^{p}}\notag\\
&+ \( \frac{(Np-N+\alpha +2mp-p)(N-\alpha-p)}{p^2 \,C(N, m-1, p, (2m+1)p +\alpha)} \)^p \psi_{N, p, \alpha+2mp, 0} (u)
\end{align}
Here the constant $E(N,m,p,\alpha)$ is given by 
\begin{align*}
E(N,m,p,\alpha)= D(N,m,p,\alpha+p) \( \frac{N-\alpha -p}{p} \).
\end{align*}
\end{theorem}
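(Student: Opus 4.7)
The approach in all four parts is iteration of the inequalities proved in Section 3, with the configuration chosen to match the target weight. Parts (I) and (III) use only critical, log-weighted Hardy inequalities. Parts (II) and (IV) combine subcritical weighted Rellich iterates (Theorem \ref{DH lemma}) with exactly one critical Hardy step at the end, so that only a single $(\log R/|x|)^{-p}$ factor is produced.

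For Part (I), the plan is to alternate the inequalities (\ref{lap Hardy}) and (\ref{GH pre}) starting from $\int_{B_R}|\lap^m u|^p|x|^{-\alpha}\,dx$: each application of (\ref{lap Hardy}) (to the current radial iterate $v$ of the form $\lap^{j}u$) converts $|\lap v|^p$ into $|\nabla v|^p$, while each application of (\ref{GH pre}) (to the radial $v$, so that $|\nabla v\cdot x/|x||=|\nabla v|$) converts $|\nabla v|^p$ into $|v|^p$. The $j$th invocation raises the $(\log R/|x|)^{-1}$-exponent from $(j-1)p$ to $jp$ and contributes the factor $((jp-1)/p)^p$, so after $2m$ applications one arrives at $|u|^p$ weighted by $|x|^{-(\alpha+2mp)}(\log R/|x|)^{-2mp}$ with accumulated constant $\prod_{j=1}^{2m}((jp-1)/p)^p$. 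Retaining the remainder of only the \emph{final} application of (\ref{GH pre}), and using Corollary \ref{cor IGH} to cover the borderline subcase $\alpha+(2m-1)p=N-p$, produces the remainder stated in (\ref{gene e bdy}). Part (III) is this same chain preceded by one application of (\ref{GH pre}) to the radial $\lap^m u$ (with $\beta=0$), which adds the extra leading factor $((p-1)/p)^p$.

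For Part (II), I would first iterate the subcritical weighted Rellich of Theorem \ref{DH lemma} a total of $m-1$ times, converting $\int|\lap^m u|^p|x|^{-\alpha}\,dx$ into a constant multiple of $\int|\lap u|^p|x|^{-\alpha-2(m-1)p}\,dx$; this reproduces the product $\prod_{j=1}^{m-1}$ appearing in $D(N,m,p,\alpha)$. Next I would apply one radial weighted Hardy, based on the identity $(r^{N-1}u')'=r^{N-1}\lap u$ together with a one-dimensional Hardy inequality with boundary condition $u'(R)=0$ (valid since $u\in C_c^{2m}$), to pass to $\int|\nabla u|^p|x|^{-\alpha-(2m-1)p}\,dx$; this contributes the factor $((N-2)p-N+2mp+\alpha)/p$. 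Finally I would apply Theorem \ref{thm new Hardy}---equivalently, (\ref{GH pre}) with $\beta=0$ and weight exponent $\alpha+(2m-1)p\le N-p$---to introduce the single $(\log R/|x|)^{-p}$; this step contributes the last factor $(p-1)/p$ in $D(N,m,p,\alpha)$ and produces the remainder $\psi_{N,p,\alpha+(2m-1)p,0}(u)$, whose overall coefficient, after multiplying by all previous constants, reduces to the expression stated in (\ref{gene e ori}). Part (IV) is Part (II) prepended by one subcritical radial Hardy applied to $|\nabla\lap^m u|^p$; this yields the extra factor $(N-\alpha-p)/p$ in $E$ and effectively shifts $\alpha\mapsto\alpha+p$ inside the subsequent invocation of Part (II).

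The conceptual structure is linear, but the bookkeeping will be the main obstacle: at each iteration one must verify that the accumulated parameters satisfy the admissibility hypotheses ($\alpha'\le N-p$ for (\ref{GH pre}), $\beta'\ge 1-p$ for (\ref{lap Hardy})) and that the accumulated constants compose exactly to the expressions $D(N,m,p,\alpha)$ and $E(N,m,p,\alpha)$ and to the specific coefficients of the remainder terms. The lower bounds $2(1-p)<\alpha$ in (II) and $2-3p<\alpha$ in (IV) are precisely what guarantee that every intermediate parameter stays admissible throughout the chain.
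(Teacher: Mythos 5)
Your proposal follows the paper's proof essentially step for step: Part (I) alternates the log-weighted Hardy inequalities (\ref{GH}) and (\ref{lap Hardy}) a total of $2m$ times; Part (II) chains one application of Davies--Hinz (\ref{DH even}) reducing $\lap^m u$ to $\lap u$, then Musina's (\ref{M 2 to 1}) (which you in effect rederive from the identity $(r^{N-1}u')'=r^{N-1}\lap u$ and the one-dimensional Hardy inequality), then one final application of (\ref{GH}) to introduce the $(\log R/|x|)^{-p}$; Parts (III) and (IV) prepend a single extra step to the chains of (I) and (II), respectively. The one slip is a bookkeeping error in Part (III): prepending (\ref{GH pre}) with $\beta=0$ shifts every subsequent $\beta$-parameter upward by $p$, so the Part (I) chain then contributes $\prod_{j=2}^{2m+1}\bigl(\frac{jp-1}{p}\bigr)^p$, and the total is $\prod_{j=1}^{2m+1}\bigl(\frac{jp-1}{p}\bigr)^p$; the net extra factor relative to (\ref{gene e bdy}) is therefore $\bigl(\frac{(2m+1)p-1}{p}\bigr)^p$, not $\bigl(\frac{p-1}{p}\bigr)^p$ as you state. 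The plan itself still produces the correct constant once the shift is tracked.
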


In order to show Theorem \ref{gene Main}, we recall that the subcritical Rellich type inequalities by Davies-Hinz \cite{DH}, an inequality by Musina \cite{M(2014)} and the following Hardy type inequality for any functions $u \in C_c^1(B_R)$, where $p < N+\delta$. 
\begin{align}\label{H 1 to 0}
\int_{B_R} |x|^\delta |\nabla u|^p \,dx 
\ge \( \frac{N+\delta}{p} -1 \)^p \int_{B_R} |x|^{\delta -p} | u|^p\,dx
\end{align}

\begin{theorem}{(\cite{DH} Theorem 12)}\label{DH lemma}

\noindent
Let $m \in \N, 2 \{ 1 +(m-1) p \} < \beta <N$. Then the following inequality holds for any functions $u \in C_c^{2m}(B_R \setminus \{ 0\})$.
\begin{align}\label{DH even}
&\int_{B_R} \frac{|u|^p}{|x|^{\beta}} \, dx \leq C(N, m, p, \beta )^p \int_{B_R} \frac{|\lap^m u|^p}{|x|^{\beta -2mp}} \, dx \\
&\text{where} \,\, C(N, m, p, \beta ) 
= \prod_{k=0}^{m-1} \frac{p^2}{(N-\beta +2kp) \{ (p-1) (N-2) +\beta -2(1+kp) \} }. \nonumber
\end{align}

\end{theorem}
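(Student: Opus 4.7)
The plan is to prove the inequality by induction on $m$, reducing everything to the base case $m=1$. The constant $C(N,m,p,\beta)$ obeys the product recurrence
$$C(N,m,p,\beta) = C(N,1,p,\beta)\cdot C(N,m-1,p,\beta-2p),$$
obtained by peeling off the $k=0$ factor and reindexing $k\mapsto k-1$ in the remaining product. Assuming the claim for $m-1$ and applying it with $u$ replaced by $\lap u$ and $\beta$ replaced by $\beta-2p$ (whose admissibility $2\{1+(m-2)p\}<\beta-2p<N$ follows from the hypothesis on $\beta$), one obtains
$$\int_{B_R}\frac{|\lap u|^p}{|x|^{\beta-2p}}\,dx \le C(N,m-1,p,\beta-2p)^p\int_{B_R}\frac{|\lap^m u|^p}{|x|^{\beta-2mp}}\,dx,$$
which chained with the $m=1$ estimate closes the induction.

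For the base case with $u$ radial, I would combine two Hardy-type inequalities. Writing $\lap u = r^{1-N}(r^{N-1}u')'$ and setting $w(r)=r^{N-1}u'(r)$, the left-hand side equals $\w_{N-1}\int_0^R |w'|^p r^\gamma\,dr$ with $\gamma = -(N-1)(p-1)+2p-\beta$, where $w$ has compact support in $(0,R)$. The one-dimensional weighted Hardy inequality
$$\int_0^R |w'|^p r^\gamma\,dr \ge \left( \frac{|\gamma-p+1|}{p}\right)^p \int_0^R |w|^p r^{\gamma-p}\,dr,$$
together with the direct simplification $|\gamma-p+1|=(p-1)(N-2)+\beta-2>0$, yields
$$\int_{B_R}\frac{|\lap u|^p}{|x|^{\beta-2p}}\,dx \ge \left(\frac{(p-1)(N-2)+\beta-2}{p}\right)^p \int_{B_R}\frac{|\nabla u|^p}{|x|^{\beta-p}}\,dx.$$
Then the classical weighted Hardy inequality (\ref{H 1 to 0}) with $\delta=-(\beta-p)$ (valid since $\beta<N$) bounds the right-hand side below by $((N-\beta)/p)^p \int|u|^p/|x|^\beta\,dx$. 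The product of the two explicit constants is exactly $C(N,1,p,\beta)^{-p}$.

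To extend from radial to general $u \in C_c^{2m}(B_R\setminus\{0\})$, I would decompose $u(r\omega)=\sum_k u_k(r)Y_k(\omega)$ in spherical harmonics and observe that the angular eigenvalue contributes a non-negative extra term $k(N+k-2)u_k/r^2$ to $\lap u$ in each sector, making the radial ($k=0$) sector extremal. For $p=2$ this reduction is immediate from $L^2$-orthogonality; for $p\neq 2$ one invokes a Schwarz-symmetrization / Talenti comparison argument (as referenced in the paper's introduction) to replace $u$ by its radial rearrangement without increasing the left-hand side or decreasing the right-hand side. The main obstacle is precisely this non-radial step when $p\neq 2$, since spherical harmonics no longer orthogonally split $L^p$ norms; within the radial calculation itself the only delicate point is verifying that at every one of the $m$ iterations of the induction the quantity $|\gamma-p+1|=(p-1)(N-2)+\beta-2-2(j-1)p$ stays positive (for $j=1,\dots,m$), which is exactly equivalent to the stated lower bound $\beta > 2\{1+(m-1)p\}$.
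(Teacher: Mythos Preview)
The paper does not prove this statement; it is quoted as Theorem~12 of Davies--Hinz \cite{DH} and used as a black box. Your induction on $m$ and the radial computation for the base case $m=1$ are both correct, and indeed coincide with the combination of Theorem~\ref{M lemma} and inequality (\ref{H 1 to 0}) that the paper itself employs elsewhere.

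The genuine gap is the passage from radial to arbitrary $u$ when $p\neq 2$. Schwarz symmetrization does not control Laplacians: there is no P\'olya--Szeg\H{o} inequality of the form $\|\lap u^*\|_p \le \|\lap u\|_p$, and the Talenti comparison principle you invoke gives a pointwise bound $u^* \le v$ on the solution of $-\lap v = (-\lap u)^*$, not a norm inequality relating $\lap^m u^*$ to $\lap^m u$. Even the sign of the weight exponent works against you: the hypothesis permits $\beta - 2mp < 0$, so $|x|^{-(\beta-2mp)}$ can be radially \emph{increasing}, and Hardy--Littlewood rearrangement then moves $\int |x|^{-(\beta-2mp)}|\lap^m u|^p$ in the wrong direction. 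The paper itself flags precisely this obstruction in the paragraph surrounding (\ref{NSCR}). The actual Davies--Hinz argument bypasses symmetrization entirely, obtaining the $m=1$ case for arbitrary $u$ from a direct weighted integration-by-parts identity (using that $|x|^{-\beta}$ is a constant multiple of $\lap |x|^{2-\beta}$) and then iterating. For the purposes of the present paper only the radial case is ever invoked (Theorem~\ref{gene Main} is stated for radial $u$), so your radial argument would be adequate there; but it does not establish the theorem as written.
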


\begin{theorem}{(\cite{M(2014)} Theorem 1.2)}\label{M lemma}
Let $\delta \in \re, p>1$ and let $m \ge 1$ be a given integer. Then the inequality
\begin{align}\label{M 2 to 1}
\int_{B_R} |x|^\delta |\lap u|^p \,dx 
\ge \left| N -\frac{N+\delta}{p} \right|^p \int_{B_R} |x|^{\delta -p} |\nabla u|^p\,dx
\end{align}
holds for any radial functions $u \in C_{c, {\rm rad}}^{2}(B_R \setminus \{ 0\})$. 
\end{theorem}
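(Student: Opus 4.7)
The strategy is to reduce the inequality to a one-dimensional weighted Hardy inequality via radial symmetry, and then to prove the 1D inequality by a standard integration-by-parts plus H\"older argument.

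For a radial function $u(x)=u(r)$ with $r=|x|$, one has $|\nabla u|=|u'(r)|$ and
\[
\lap u = u''(r)+\frac{N-1}{r}u'(r) = r^{-(N-1)} \bigl(r^{N-1} u'(r)\bigr)'.
\]
Setting $\varphi(r):=r^{N-1} u'(r)$ and passing to radial integrals, the two sides of (\ref{M 2 to 1}) become
\[
\int_{B_R} |x|^\delta |\lap u|^p\,dx = \w_{N-1} \int_0^R r^\gamma |\varphi'(r)|^p\,dr,\quad
\int_{B_R} |x|^{\delta-p} |\nabla u|^p\,dx = \w_{N-1} \int_0^R r^{\gamma-p} |\varphi(r)|^p\,dr,
\]
where $\gamma:=\delta-(N-1)(p-1)$. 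A direct computation gives $\frac{\gamma+1-p}{p}=\frac{N+\delta}{p}-N$, so (\ref{M 2 to 1}) is equivalent to the one-dimensional weighted Hardy inequality
\[
\int_0^R r^\gamma |\varphi'|^p\,dr \ge \left|\frac{\gamma+1-p}{p}\right|^p \int_0^R r^{\gamma-p}|\varphi|^p\,dr.
\]

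To prove this 1D inequality, I would assume first that $\gamma+1-p\neq 0$, since otherwise the right-hand side of (\ref{M 2 to 1}) already vanishes and the conclusion is trivial. The hypothesis $u\in C^2_{c,{\rm rad}}(B_R\setminus\{0\})$ makes $\varphi=r^{N-1}u'$ compactly supported in the open interval $(0,R)$, which is exactly what is needed to ensure all boundary contributions are zero. Using the identity $(r^{\gamma+1-p})'=(\gamma+1-p)\,r^{\gamma-p}$ and integrating by parts produces
\[
(\gamma+1-p)\int_0^R r^{\gamma-p}|\varphi|^p\,dr = -p \int_0^R r^{\gamma+1-p} |\varphi|^{p-2} \varphi\,\varphi'\,dr.
\]
Factoring $r^{\gamma+1-p}=r^{\gamma/p}\cdot (r^{\gamma/p-1})^{p-1}$ and applying H\"older's inequality with exponents $p$ and $p/(p-1)$ to the right-hand side yields
\[
|\gamma+1-p|\int_0^R r^{\gamma-p}|\varphi|^p\,dr \le p\left(\int_0^R r^\gamma |\varphi'|^p\,dr\right)^{1/p}\left(\int_0^R r^{\gamma-p}|\varphi|^p\,dr\right)^{(p-1)/p},
\]
and dividing through by the last factor produces the 1D Hardy inequality, and hence (\ref{M 2 to 1}).

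The argument is quite robust and the only real obstacle is bookkeeping. The assumption of compact support bounded away from the origin plays a double role: it both makes $\varphi$ compactly supported in $(0,R)$ so that the integration-by-parts boundary terms vanish, and it bypasses any integrability worry at $r=0$ for arbitrary $\delta\in\re$. Sharpness of the constant is not part of the stated theorem, so no test-function computation is required; the sign of $\gamma+1-p$ is absorbed uniformly by the absolute value in the stated inequality.
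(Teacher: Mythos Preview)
Your proof is correct and follows essentially the same route as the paper: in the proof of Lemma~\ref{lemma density} the paper recalls this argument, writing $\lap u = r^{-(N-1)}(r^{N-1}u')'$, setting $w=r^{N-1}u'$, and invoking the one-dimensional Hardy inequality~(\ref{1dim Hardy}) with the same exponent bookkeeping you carry out. The only difference is that you also supply the standard integration-by-parts plus H\"older proof of~(\ref{1dim Hardy}), which the paper simply quotes.
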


For the inequality (\ref{M 2 to 1}) with $p=2$ for any functions, see Theorem 1.7. in \cite{TZ}. 

\begin{lemma}\label{lemma density}
Theorem \ref{DH lemma} holds true even if $C_c^{2m}(B_R \setminus \{ 0\})$ is replaced by $C_c^{2m}(B_R )$. 
Furthermore, Theorem \ref{M lemma} holds true even if $C_{c, {\rm rad}}^{2}(B_R \setminus \{ 0\})$ is replaced by $C_{c, {\rm rad}}^{2}(B_R )$. 
\end{lemma}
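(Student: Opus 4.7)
The natural approach is a cutoff-and-density argument that transfers each inequality from test functions supported away from the origin to the full test class in $B_R$. Fix $\eta \in C^\infty([0,\infty))$ with $\eta \equiv 0$ on $[0,1]$ and $\eta \equiv 1$ on $[2,\infty)$, and set $\eta_\ep(x) := \eta(|x|/\ep)$ for $\ep \in (0, R/4)$. For $u$ in the larger class, the function $u_\ep(x) := u(x)\,\eta_\ep(x)$ vanishes on $B_\ep$, inherits smoothness and radiality from $u$, and therefore lies in the class with support away from the origin. The corresponding inequality from \cite{DH} or \cite{M(2014)} applies to $u_\ep$, so the proof reduces to verifying that both sides converge to the analogous quantities for $u$ as $\ep \to 0$.

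For Theorem \ref{DH lemma}, the left-hand side $\int_{B_R} |u_\ep|^p |x|^{-\beta}\,dx$ converges to $\int_{B_R} |u|^p |x|^{-\beta}\,dx$ by dominated convergence, using $|u_\ep| \le \|u\|_\infty$, pointwise convergence $u_\ep \to u$, and the local integrability of $|x|^{-\beta}$ at the origin since $\beta < N$. For the right-hand side, Leibniz's formula decomposes $\lap^m u_\ep = \eta_\ep \lap^m u + R_\ep$, where $R_\ep$ is a finite sum of terms, each a product of partial derivatives of $\eta_\ep$ of order $2m-j$ and of $u$ of order $j$ for some $0 \le j \le 2m-1$, and each supported in the annulus $A_\ep := \{\ep \le |x| \le 2\ep\}$. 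The main term $\eta_\ep \lap^m u \to \lap^m u$ in the weighted $L^p$-norm by DCT, and each remainder term is pointwise bounded (up to constants depending on $u$) by $\ep^{-(2m-j)}$ on $A_\ep$, giving a weighted $L^p$-contribution of order
\begin{equation*}
\( \ep^{-(2m-j)p} \int_{A_\ep} |x|^{2mp - \beta}\,dx \)^{1/p} \;\lesssim\; \ep^{\,j + (N - \beta)/p}.
\end{equation*}
This vanishes as $\ep \to 0$ precisely because $\beta < N$, the critical case being $j = 0$. The inequality then passes to $u$ by continuity.

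Theorem \ref{M lemma} is handled identically with $m=1$: since $\eta_\ep$ is radial, $u_\ep$ stays radial, and $\lap u_\ep = \eta_\ep \lap u + 2\nabla u \cdot \nabla \eta_\ep + u\, \lap\eta_\ep$. The dominant cross-term $u\,\lap\eta_\ep$ has magnitude $\sim \ep^{-2}$ on $A_\ep$, while $\nabla u \cdot \nabla \eta_\ep$ has magnitude $\sim \ep^{-1}|\nabla u|$ there. Using the radial smoothness bound $|\nabla u(x)| \le C|x|$ near the origin, which follows from $u'(0)=0$ for smooth radial $u$, both contributions reduce to integrals of the form $\ep^a$ with $a>0$ under the natural integrability condition on $\delta$ needed for both sides of Musina's inequality to be finite, and the right-hand side convergence is treated analogously. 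The main technical point in both proofs is the careful bookkeeping of the Leibniz cross-terms and the verification that the scaling of $\nabla^{2m-j}\eta_\ep \sim \ep^{-(2m-j)}$ is compensated by the volume of $A_\ep$ and the weight; the key structural input is precisely the upper bound on the weight exponent that guarantees integrability of the weight near the origin.
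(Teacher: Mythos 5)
For the first half (extending Theorem \ref{DH lemma}) your cutoff argument is essentially the one in the paper: the paper uses $\eta_\ep u$, the crude bound $|\lap^m(\eta_\ep u - u)|\le C(u)\ep^{-2m}$ on $B_\ep$, and the scaling $\ep^{-2mp}\int_{B_\ep}|x|^{2mp-\beta}dx \sim \ep^{N-\beta}\to 0$; your $j$-by-$j$ Leibniz bookkeeping gives the same $\ep^{N-\beta}$ decay from the worst term $j=0$. That part is fine.

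For the second half (extending Theorem \ref{M lemma}) there is a genuine gap. Your argument makes the term $u\,\lap\eta_\ep$ vanish in $L^p(|x|^\delta dx)$ only when the exponent $\ep^{N+\delta-2p}$ is positive, i.e.\ when $\delta>2p-N$; you attribute this to ``the natural integrability condition on $\delta$,'' but the natural finiteness condition for both sides of Musina's inequality at a point where $u(0)\ne 0$ is merely $\delta>-N$, which is strictly weaker. More to the point, the endpoint $\delta=2p-N$ is exactly the value used in the paper's application (in Theorem \ref{gene Main} one takes $\delta=-\alpha-2(m-1)p$ with $\alpha=0$ and $p=N/k$, giving $\delta=2p-N$), and there your annulus contribution is $O(1)$ rather than $o(1)$: if $u(0)\ne0$ then $\int_{A_\ep}|x|^{2p-N}|u\,\lap\eta_\ep|^p\,dx\to |u(0)|^p\int_{A_1}|y|^{2p-N}|\lap\eta|^p\,dy>0$, and the matching term $u\nabla\eta_\ep$ on the right-hand side is also $O(1)$, so the inequality for $u_\ep$ does not pass to $u$ in the limit. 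The paper avoids the cutoff entirely for this part: it rewrites $\int|x|^\delta|\lap u|^p\,dx=\w_{N-1}\int_0^R|(r^{N-1}u')'|^p r^{-(N-1)p+N-1-\delta}\,dr$ and applies the one-dimensional Hardy inequality (\ref{1dim Hardy}) directly to $w(r)=r^{N-1}u'(r)$, observing that $w(0)=0$ because $u'(0)=0$ for smooth radial $u$, so no vanishing of $u$ at the origin is needed and the argument works for every $\delta\in\re$. You should replace your cutoff argument for the Musina part with this direct one (or at least restrict the claimed range and handle the borderline $\delta=2p-N$ separately, e.g.\ via a logarithmic cutoff on $\ep<|x|<\sqrt\ep$, though the paper's route is cleaner and covers all $\delta$).
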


Before the proof of Lemma \ref{lemma density}, we recall the one dimensional Hardy type inequality (\ref{1dim Hardy}) for any $a \in \re, p>1$ and $w \in C^1(0,R)$ with $w(0) =w(R) =0$:
\begin{align}\label{1dim Hardy}
\int_0^R r^a | w'|^p \,dr \ge \left| \frac{a+1-p}{p} \right|^p \int_0^R r^{a-p} | w|^p \,dr
\end{align}

\begin{proof}(Proof of Lemma \ref{lemma density})\\
Let $\eta$ be a smooth function with $\eta \equiv 0$ on $B_{1/2}$ and $\eta \equiv 1$ on $\re^N \setminus B_{1}$. For small $\ep >0$, we set $\eta_\ep (x)= \eta (\ep^{-1} x)$. Then we see that $\eta_\ep u \in C_c^{2m} (B_R \setminus \{ 0\})$ for $u \in C_c^{2m}(B_R)$ and $|\lap^m \eta_\ep | \le C \ep^{-2m}$. Then we have
\begin{align*}
\int_{B_R} \frac{|\lap^m (\eta_\ep u -u)|^p}{|x|^{\beta -2mp}} \, dx 
&\le C (u, \nabla u, \cdots, \lap^{2m} u) \, \ep^{-2mp} \int_{B_\ep} \frac{1}{|x|^{\beta -2mp}} \, dx \\
&\le C \ep^{-\beta +N} \to 0 \quad (\ep \to 0)
\end{align*}
since $\beta < N$. Therefore we have
\begin{align*}
\int_{B_R} \frac{|u|^p}{|x|^{\beta}} \, dx 
&=\lim_{\ep \to 0} \int_{B_R} \frac{|\eta_\ep u|^p}{|x|^{\beta}} \, dx \\
&\leq C(N, m, p, \beta )^p \lim_{\ep \to 0} \int_{B_R} \frac{|\lap^m (\eta_\ep u)|^p}{|x|^{\beta -2mp}} \, dx \\
&= C(N, m, p, \beta )^p \int_{B_R} \frac{|\lap^m u|^p}{|x|^{\beta -2mp}} \, dx
\end{align*}
for any functions $u \in C_c^{2m}(B_R)$. Hence Theorem \ref{DH lemma} holds true even if $C_c^{2m}(B_R \setminus \{ 0\})$ is replaced by $C_c^{2m}(B_R )$. 

Next we recall the proof of Theorem \ref{M lemma} and check that there are no problems. 
Let $u \in C_{c, {\rm rad}}^{2}(B_R)$. Then we have
\begin{align*}
\int_{B_R} |x|^\delta |\lap u|^p \,dx 
&= \w_{N-1} \int_0^R \left| u'' + \frac{N-1}{r} u' \right|^p r^{N-1-\delta}\, dr\\
&= \w_{N-1} \int_0^R \left| ( r^{N-1} u' )' \right|^p r^{-(N-1)p+ N-1-\delta}\, dr
\end{align*}
Here, note that $r^{N-1} u' (r) =0$ at $r=0$ without $u(0)=0$. From (\ref{1dim Hardy}), we have
\begin{align*}
\int_{B_R} |x|^\delta |\lap u|^p \,dx
&\ge \left| \frac{(p-1)N +\delta}{p} \right|^p \int_0^R \left| r^{N-1} u'  \right|^p r^{-Np+ N-1-\delta}\, dr\\
&= \left| N -\frac{N+\delta}{p} \right|^p \int_{B_R} |x|^{\delta -p} |\nabla u|^p\,dx.
\end{align*}
Theorem \ref{M lemma} holds true even if $C_{c, {\rm rad}}^{2}(B_R \setminus \{ 0\})$ is replaced by $C_{c, {\rm rad}}^{2}(B_R )$. 
\end{proof}

From Lemma \ref{lemma density} and several Hardy type inequalities (\ref{GH}), (\ref{lap Hardy}), (\ref{H 1 to 0}), we can obtain Theorem \ref{gene Main}. 

\begin{proof}(Proof of Theorem \ref{gene Main})\\
(I) Let $u \in C_{c, {\rm rad}}^{2m}(B_R)$. By using the inequalities (\ref{GH}), (\ref{lap Hardy}) $2m$ times totally, we obtain (\ref{gene e ori}) as follows.
\begin{align*}
\int_{B_R} \frac{|\lap^m u|^p}{|x|^{\alpha}}\,dx 
\ge &\( \frac{(p-1)(2p-1)(3p-1) \cdots (2mp-1)}{p^{2m}} \)^p \int_{B_R} \frac{|u|^p}{|x|^{\alpha+ 2mp} \( \log \frac{R}{|x|} \)^{2mp}} \,dx \\
&+\( \prod_{j=1}^{2m-1} \frac{jp -1}{p} \)^p \psi_{N,p, \alpha + (2m-1)p, (2m-1)p}(u).
\end{align*}

\noindent
(II) Let $u \in C_{c, {\rm rad}}^{2m}(B_R)$. By the inequality (\ref{DH even}) with $\beta = \alpha + 2(m-1)p$, we have
\begin{align}\label{(II)1}
\int_{B_R} \frac{|\lap^m u|^p}{|x|^\alpha}\,dx \ge C(N,m-1, p, 2(m-1)p+\alpha)^{-p} \int_{B_R} \frac{|\lap u|^p}{|x|^{\alpha+ 2(m-1)p}}\,dx. 
\end{align}
Applying the inequality (\ref{M 2 to 1}) with $\delta = -\alpha- 2(m-1)p$ implies that 
\begin{align}\label{(II)2}
\int_{B_R} \frac{|\lap u|^p}{|x|^{\alpha+ 2(m-1)p}}\,dx
\ge \( N- \frac{N-\alpha -2(m-1)p}{p} \)^p \int_{B_R} \frac{|\nabla u|^p}{|x|^{\alpha+ 2mp-p}}\,dx.
\end{align}
Finally, by the inequality (\ref{GH}), 
we have
\begin{align}\label{(II)3}
\int_{B_R} \frac{|\nabla u|^p}{|x|^{\alpha+ 2mp-p}}\,dx 
\ge \( \frac{p-1}{p} \)^p \int_{B_R} \frac{|u|^p}{|x|^{\alpha+ 2mp} \( \log \frac{R}{|x|} \)^p} \,dx + \psi_{N,p, \alpha + (2m-1)p, 0} (u).
\end{align}
Therefore we obtain (\ref{gene e ori}) from (\ref{(II)1}), (\ref{(II)2}) and (\ref{(II)3}).

\noindent
(III) The proof of (\ref{gene o bdy}) is completely same as it of (I). 

\noindent
(IV) Let $u \in C_{c, {\rm rad}}^{2m+1}(B_R)$. By the inequality (\ref{H 1 to 0}),
\begin{align}\label{(IV)1}
\int_{B_R} \frac{|\nabla \lap^m u|^p}{|x|^\alpha}\,dx \ge \( \frac{N-\alpha}{p} -1\)^p \int_{B_R} \frac{|\lap^m u|^p}{|x|^{\alpha+ p}}\,dx. 
\end{align}
Applying the inequality (\ref{gene e ori}) implies that 
\begin{align}\label{(IV)2}
\int_{B_R} \frac{|\lap^m u|^p}{|x|^{\alpha+ p}}\,dx
&\ge D(N, m, p, \alpha +p)^p \int_{B_R} \frac{|u|^p}{|x|^{\alpha+ (2m+1)p} \( \log \frac{R}{|x|} \)^p} \,dx \notag\\
&+ \( \frac{Np-N+\alpha +2mp-p}{p \,C(N, m-1, p, (2m+1)p+\alpha)} \)^p \psi_{N, p, \alpha+2mp, 0} (u)
\end{align}
Therefore we obtain (\ref{gene o ori}) from (\ref{(IV)1}) and (\ref{(IV)2}).
\end{proof}

\begin{proof}(Part I on the proof of Theorem \ref{Main thm})\\
We can obtain the lower estimates (\ref{lower est}) 
from Theorem \ref{gene Main} with $\alpha = 0, p=\frac{N}{2m}$. Since $R_{k,p}^{{\rm rad}}, R_{k,N}^{{\rm rad}} > 0$, we have $R_{k,\gamma}^{{\rm rad}} > 0$ for $\gamma \in [p, N]$. Moreover $R_{k,\gamma}^{{\rm rad}}$ is attained for $\gamma \in (p,N)$ from the compactness of the embedding in Proposition \ref{prop cpt} in \S \ref{Appendix}. 
\end{proof}

%
%

\subsection{Optimality and attainability: Part II on the proof of Theorem \ref{Main thm}}\label{optimality}

Let $kp =N$. 
In order to calculate the optimal constant $R^{{\rm rad}}_{k,\gamma}$, it is important to find a  virtual minimizer of $R^{{\rm rad}}_{k,\gamma}$. Differently from the first order case, it seems difficult to find a scale invariance structure of the derivative term $|u|_{W_0^{k,p}}$ even if we assume that $u$ is a radial function. In this point of view, it seems difficult to find the virtual minimizer of $R^{{\rm rad}}_{k,\gamma}$.  
However we expect the existence of such important functions which play similar roles to the first order case. Such important functions are 
$$
V_1(x)= \( \log \frac{R}{|x|} \)^{\frac{p-1}{p}} = \( \log \frac{R}{|x|} \)^{\frac{N-k}{N}} \,\,{\rm for}\,\,\, \gamma = p 
$$ 
and 
$$
V_2(x)= \( \log \frac{R}{|x|} \)^{\frac{N-1}{p}}= \( \log \frac{R}{|x|} \)^{k \frac{N-1}{N}}\,\,{\rm for}\,\, \,\gamma =N.
$$
Note that $\gamma =p$ is the optimal exponent with respect to the singularity of the potential $|x|^{-N} \( \log \frac{R}{|x|} \)^{-\gamma}$ at the origin. 
On the other hand, $\gamma =N$ is the optimal exponent with respect to the boundary singularity of the potential $|x|^{-N} \( \log \frac{R}{|x|} \)^{-\gamma}$. 
Set $V_{1,\ep} (x)= \( \log \frac{R}{|x|} \)^{\frac{p-1}{p} -\ep}$ and $V_{2,\ep} (x)= \( \log \frac{R}{|x|} \)^{\frac{N-1}{p} + \ep}$ for small $\ep >0$. For fixed $0< \delta \ll 1$, we see that
\begin{align*}
&\int_{B_\delta} |\nabla^k V_{1,\ep} |^p \,dx < \infty \,\, \text{for each}\,\, \ep >0, \text{however}\,\,
\int_{B_\delta} |\nabla^k V_{1,\ep} |^p \,dx \nearrow \infty \,(\ep \to 0), \\
&\int_{B_R \setminus B_{R-\delta}} |\nabla^k V_{2,\ep} |^p \,dx < \infty \,\, \text{for each}\,\, \ep >0, \text{however}\,\,
\int_{B_R \setminus B_{R-\delta}} |\nabla^k V_{2,\ep} |^p \,dx \nearrow \infty\,(\ep \to 0).
\end{align*}
In the end of Part II on the proof of Theorem \ref{Main thm}, we shall show $V_1$ (respectively, $V_2$) is a virtual minimizer of $R_{k,p}^{{\rm rad}}$ (respectively, $R_{k,N}^{{\rm rad}}$). 
For the details, see the proof below. 

\begin{remark}
We observe that in the first order case $k=1$, $V_1(x)= V_2(x)= \( \log \frac{R}{|x|} \)^{\frac{N-1}{N}}$ which is known as a virtual minimizer of the critical Hardy inequality (\ref{H_N}). Except for the first order case, $V_1 \not= V_2$. In this paper, we treat only the higher order case $k \in \N, k \ge 2$. However, even in the fractional case where $0 < k < 1$, we believe that these two functions $V_1, V_2$ are important.  
\end{remark}

\begin{proof}(Part II on the proof of Theorem \ref{Main thm})\\
First we consider a radial test function $\phi_\ep \in W_{0, {\rm rad}}^{k,p}(B_R)$ which is given by
\begin{align*}
\phi_\ep (x) = V_{1,\ep}(x) \, \varphi (x) = \( \log \frac{R}{|x|} \)^{\frac{p-1}{p} -\ep} \varphi (x)
\end{align*}
where $\varphi \in C_c^{\infty}(B_R)$ is a radial function, where $\varphi \equiv 1$ on $B_{R/2}$ and $\varphi \equiv 0$ on $B_R \setminus B_{3R/4}$. From Proposition \ref{prop log} in \S \ref{Appendix}, we have
\begin{align*}
|\nabla^k V_{1,\ep}(x)| =
|E_k| \( \frac{p-1}{p} -\ep  \) |x|^{-k} \( \log \frac{R}{|x|} \)^{-\frac{1}{p} -\ep}
+ o\( \( \log \frac{R}{|x|} \)^{-\frac{1}{p} -\ep}  \) \,\,(|x| \to 0)
\end{align*}
where $E_k = C_{m, 2m-1}$ if $k=2m$, and $E_k = D_{m, 2m}$ if $k=2m+1$, $C_{m,2m-1}$ and $D_{m, 2m}$ are given by Proposition \ref{prop log} in \S \ref{Appendix}. 
Therefore we have
\begin{align*}
R^{{\rm rad}}_{k, p} 
&\le \dfrac{|\phi_\ep |^p_{k,p}}{ \int_{B_R} \frac{|\phi_\ep|^p}{|x|^N \( \log \frac{R}{|x|} \)^p} \,dx} \\
&=\dfrac{|E_k|^p \( \frac{p-1}{p} -\ep  \)^p \w_{N-1} \int_0^{\frac{R}{2}} \( \log\frac{R}{r} \)^{-1-p\ep} \frac{dr}{r} + o\( \int_0^{\frac{R}{2}} \( \log\frac{R}{r} \)^{-1-p\ep} \frac{dr}{r} \)}{\w_{N-1} \int_0^{\frac{R}{2}} \( \log\frac{R}{r} \)^{-1-p\ep} \frac{dr}{r} + o\( \int_0^{\frac{R}{2}} \( \log\frac{R}{r} \)^{-1-p\ep} \frac{dr}{r} \)} \\
&= |E_k|^p \( \frac{p-1}{p}  \)^p + o(1) \quad (\ep \to 0).
\end{align*}
Since 
\begin{align*}
|E_k|^p \( \frac{p-1}{p}  \)^p = 
\begin{cases}
\( \frac{N-k}{kN} \prod_{j=1}^m 2j \,(N-2j)\, \)^p &\text{if}\,\, k=2m, \\
\( \frac{N-k}{N} \prod_{j=1}^m 2j \,(N-2j) \,\)^p &\text{if}\,\, k=2m+1,
\end{cases}
\end{align*}
we obtain 
\begin{align*}
R^{{\rm rad}}_{k, p} = 
\begin{cases}
\( \frac{N-k}{kN} \prod_{j=1}^m 2j \,(N-2j)\, \)^p &\text{if}\,\, k=2m, \\
\( \frac{N-k}{N} \prod_{j=1}^m 2j \,(N-2j) \,\)^p &\text{if}\,\, k=2m+1
\end{cases}
\end{align*}
from Part I on the proof of Theorem \ref{Main thm}. 
Next we consider a radial test function $\psi_\ep \in W_{0, {\rm rad}}^{k,p}(B_R)$ which is given by
\begin{align*}
\psi_\ep (x) = V_{2,\ep}(x) \, \varphi (x) = \( \log \frac{R}{|x|} \)^{\frac{N-1}{p} +\ep} \( 1-\varphi (x) \).
\end{align*}
From Proposition \ref{prop log} in \S \ref{Appendix}, we have
\begin{align*}
|\nabla^k V_{2,\ep}(x)| =
\prod_{i=0}^{k-1} \( \frac{N-1}{p} +\ep -i \) |x|^{-k} \( \log \frac{R}{|x|} \)^{-\frac{1}{p} +\ep}
+ o\( \( \log \frac{R}{|x|} \)^{-\frac{1}{p} + \ep}  \) \,\,(|x| \to R).
\end{align*}
Therefore we have
\begin{align*}
R^{{\rm rad}}_{k, N} 
&\le \dfrac{|\psi_\ep |^p_{k,p} }{ \int_{B_R} \frac{|\psi_\ep|^p}{|x|^N \( \log \frac{R}{|x|} \)^N} \,dx} \\
&=\dfrac{ \( \prod_{i=0}^{k-1} \( \frac{N-1}{p} +\ep -i \) \)^p \w_{N-1} \int_{\frac{3R}{4}}^R \( \log\frac{R}{r} \)^{-1+p\ep} \frac{dr}{r} + o\( \int_{\frac{3R}{4}}^R \( \log\frac{R}{r} \)^{-1+p\ep} \frac{dr}{r} \)}{\w_{N-1} \int_{\frac{3R}{4}}^R \( \log\frac{R}{r} \)^{-1+p\ep} \frac{dr}{r} + o\( \int_{\frac{3R}{4}}^R \( \log\frac{R}{r} \)^{-1+p\ep} \frac{dr}{r} \)} \\
&= \( \prod_{i=0}^{k-1} \( \frac{N-1}{p} -i \)  \)^p+ o(1) \quad (\ep \to 0).
\end{align*}
Since 
\begin{align*}
\( \prod_{i=0}^{k-1} \( \frac{N-1}{p} -i \)  \)^p 
= \( \prod_{i=0}^{k-1} \( \frac{(k-i)N -k}{N} \)  \)^p 
=\( \prod_{j=1}^k \frac{jN-k}{N} \)^p
\end{align*}
we obtain
\begin{align*}
R^{{\rm rad}}_{k, N} = \( \prod_{j=1}^k \frac{jN-k}{N} \)^p
\end{align*}
from Part I on the proof of Theorem \ref{Main thm}. 
Moreover, by using the same test functions $\phi_\ep, \psi_\ep$, we can show that $R^{{\rm rad}}_{k, \gamma} = 0$ if $\gamma \not\in [p,N]$. 

Finally, we shall show that $R^{{\rm rad}}_{k, p}, R^{{\rm rad}}_{k, N}$ are not attained. 
Assume that $R^{{\rm rad}}_{k, p}$ is attained by $u \in W_{0, {\rm rad}}^{k,p} \setminus \{ 0\}$.
Then $\psi_{N,p,N-p, 0}(u)=0$ in Theorem \ref{gene Main} (II), (IV) which implies that $u(x)=c \( \log \frac{R}{|x|} \)^{\frac{p-1}{p}} =c V_1(x) \,(c \not= 0) \not\in W_{0, {\rm rad}}^{k,p} (B_R)$. This is a contradiction. On the other hand, if we assume that $R^{{\rm rad}}_{k, N}$ is attained by $u \in W_{0, {\rm rad}}^{k,p} \setminus \{ 0\}$, 
then $\psi_{N,p,N-p, N-p}(u)=0$ in Theorem \ref{gene Main} (I), (III) which implies that $u(x)=c \( \log \frac{R}{|x|} \)^{\frac{N-1}{p}} =c V_2(x) \,(c \not= 0) \not\in W_{0, {\rm rad}}^{k,p} (B_R)$. This is also a contradiction. 
Hence $R^{{\rm rad}}_{k, p}, R^{{\rm rad}}_{k, N}$ are not attained. 
The proof of Theorem \ref{Main thm} is now complete. 
\end{proof}


\begin{proof}(Proof of Corollary \ref{cor main})\\
Since $\log \frac{R}{|x|} \le \log \frac{aR}{|x|}$ for any $a \ge 1$ and any $x \in B_R$, 
the inequality (\ref{NSCR p}) immediately follows from Theorem \ref{Main thm} (iii). 
In order to show the optimality of the constant $R_{k,p}^{{\rm rad}}$ in (\ref{NSCR p}), it is enough to change the test function $\phi_\ep (x) = \( \log \frac{R}{|x|} \)^{\frac{p-1}{p} -\ep} \varphi (x)$ which is in Part II on the proof of Theorem \ref{Main thm} to $\( \log \frac{aR}{|x|} \)^{\frac{p-1}{p} -\ep} \varphi (x)$. 
Finally, the non-attainability of the optimal constant $R_{k,p}^{{\rm rad}}$ in (\ref{NSCR p}) follows from the non-attainability of $R_{k,p}^{{\rm rad}}$ in Theorem \ref{Main thm}. 
\end{proof}

\section{The cause of the gap in \cite{ASant}}\label{gap}

In Remark \ref{rem AS}, we explained the gap of the optimality of the constant $A(N,m)$ in the higher order critical Rellich inequality. On the other hand, we can obtain the optimal constant $R_{k,p}^{{\text{rad}}}$ in Theorem \ref{Main thm} correctly in our argument. Where does the gap come from?  
Actually, our argument resembles the argument in \cite{ASant} in the view of tools, which are three Hardy-Rellich type inequalities, for showing the higher order critical Rellich inequality. The only difference between our argument and it in \cite{ASant} is the order of use of these three tools. 
In this section, we explain concretely where the gap comes from when $(k,p)=(4, 2)$, that is $N=8$. 

We recall the argument in \cite{ASant} to show the higher order critical Rellich inequality. They used the following three Hardy-Rellich type inequalities:
\begin{align}\label{2 to 1}
\int_{B_1} | \lap w |^2 \,dx &\ge \frac{N^2}{4} \int_{B_1} \frac{|\nabla w|^2}{|x|^2}\,dx\\
\label{R_p}
\int_{B_1} \frac{| \lap \mathbf{v} |^2}{|x|^2} \,dx &\ge \( \frac{(N-6)(N+2)}{4} \)^2 \int_{B_1} \frac{|\mathbf{v}|^2}{|x|^6}\,dx\\
\label{log H}
\int_{B_1} \frac{| \nabla f|^2}{|x|^6}\,dx &\ge \frac{1}{4} \int_{B_1} \frac{| f|^2}{|x|^8 (\log \frac{1}{|x|})^2}\,dx
\end{align}
In fact, we can derive the 8th order critical Rellich inequality by three Hardy-Rellich type inequalities (\ref{2 to 1}), (\ref{R_p}), (\ref{log H}) as follows.
\begin{align*}
\int_{B_1} | \lap^2 u |^2 \,dx 
&\ge \frac{N^2}{4} \int_{B_1} \frac{|\nabla \lap u|^2}{|x|^2}\,dx \\
&= \frac{N^2}{4} \int_{B_1} \frac{|\lap (\nabla u)|^2}{|x|^2}\,dx \\
&\ge \frac{N^2}{4} \( \frac{(N-6)(N+2)}{4} \)^2 \int_{B_1} \frac{| \nabla u|^2}{|x|^6}\,dx \\
&\ge \frac{N^2}{4} \( \frac{(N-6)(N+2)}{4} \)^2 \frac{1}{4} \int_{B_1} \frac{| u|^2}{|x|^8 (\log \frac{1}{|x|})^2}\,dx
\end{align*}
On the other hand, our aurgument is as follows, see the proof of Theorem \ref{gene Main} (II).
\begin{align*}
\int_{B_1} | \lap^2 u |^2 \,dx 
&\ge \( \frac{(N-4)(N-2+4-2)}{2^2} \)^2 \int_{B_1} \frac{|\lap u|^2}{|x|^4}\,dx \\
&\ge \( \frac{N(N-4)}{4} \)^2 \( N -\frac{N-4}{2} \)^2  \int_{B_1} \frac{| \nabla u|^2}{|x|^6}\,dx \\
&\ge \( \frac{N(N-4)}{4} \)^2 \( N -\frac{N-4}{2} \)^2 \frac{1}{4} \int_{B_1} \frac{| u|^2}{|x|^8 (\log \frac{1}{|x|})^2}\,dx
\end{align*}
If we substitute $8$ for $N$, then we have
\begin{align*}
A(8, 2)&= \frac{N^2}{4} \( \frac{(N-6)(N+2)}{4} \)^2 \frac{1}{4} \\
&=100 \not= 16 \cdot 36\\
&= \( \frac{N(N-4)}{4} \)^2 \( N -\frac{N-4}{2} \)^2 \frac{1}{4} = R_{4,2}^{{\text rad}}.
\end{align*}
Recently, the authors in \cite{HT} showed that the optimal constant in (\ref{R_p}) can be improved for curl-free vector fields, see Corollary 4. in \cite{HT}. 
Therefore we can observe that $\( \frac{(N-6)(N+2)}{4} \)^2$ in (\ref{R_p}) is not the optimal constant for curl-free vector fields $\mathbf{v}=\nabla u$. More precisely, the authors in \cite{HT} obtained the optimal constant $77$ as follows.
\begin{align}\label{HT Cor}
\int_{B_1} \frac{|\nabla \lap u|^2}{|x|^2}\,dx  
\ge 77 \int_{B_1} \frac{| \nabla u|^2}{|x|^6}\,dx,\,\,\text{where}\,\, B_1 \subset \re^8
\end{align}
Besides, since $u$ is a radial function, we can improve (\ref{HT Cor}) a little bit more. In fact, by (\ref{H 1 to 0}) and (\ref{M 2 to 1}), we have 
\begin{align*}
\( \frac{N-4}{2} \)^{-2} \int_{B_1} \frac{|\nabla \lap u|^2}{|x|^2}\,dx  
\ge
 \int_{B_1} \frac{| \lap u|^2}{|x|^4}\,dx
\ge
\( \frac{N+4}{2} \)^2 \int_{B_1} \frac{| \nabla u|^2}{|x|^6}\,dx,\text{where}\,\, B_1 \subset \re^N,
\end{align*}
which implies that 
\begin{align}\label{HT Cor rad}
\int_{B_1} \frac{|\nabla \lap u|^2}{|x|^2}\,dx  
\ge
144 \int_{B_1} \frac{| \nabla u|^2}{|x|^6}\,dx,\,\,\text{where}\,\, B_1 \subset \re^8.
\end{align}
Therefore, if we use (\ref{HT Cor rad}) instead of (\ref{R_p}), then we can obtain the optimal constant $R_{4,2}^{{\text{rad}}}$ correctly even in the argument in  \cite{ASant}. 

As a consequence, the cause of the gap in \cite{ASant} comes from the non-optimality of the constant in the inequality (\ref{R_p}) for curl-free radial vector fields $\mathbf{v}=\nabla u$.

%
%

\section{Appendix}\label{Appendix}

\begin{prop}\label{prop cpt}
Let $1 < p =\frac{N}{k}, k \ge 2$ and $P_\gamma (x) = |x|^{-N} \( \log \frac{R}{|x|} \)^{-\gamma}$. Then the embedding: $W_{0, {\rm rad}}^{k,p}(B_R) \hookrightarrow L^p \( B_R; P_\gamma (x) \,dx \)$ is compact for $\gamma \in (p,N)$ and is non-compact for $\gamma = p,N$. 
\end{prop}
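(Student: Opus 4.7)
The plan is to decouple the compactness assertion for $\gamma\in(p,N)$ from the two non-compactness assertions at $\gamma=p$ and $\gamma=N$, and to handle both using the critical Rellich inequalities established in Theorem \ref{Main thm} together with classical Rellich--Kondrachov compactness on regions where the weight $P_\gamma$ is bounded.

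For the compactness at $\gamma\in(p,N)$, I would take a bounded sequence $\{u_n\}\subset W^{k,p}_{0,\mathrm{rad}}(B_R)$ with weak limit $u$, set $w_n:=u_n-u$, and decompose $B_R=B_\delta\cup(B_{R-\delta'}\setminus B_\delta)\cup(B_R\setminus B_{R-\delta'})$ for small $\delta,\delta'>0$. On the bulk annulus $B_{R-\delta'}\setminus B_\delta$ the weight $P_\gamma$ is bounded above by a constant depending on $\delta,\delta'$, and the classical Rellich--Kondrachov theorem yields $w_n\to 0$ in $L^p$ there. For the near-origin contribution I would exploit the factorization $P_\gamma(x)=P_p(x)\bigl(\log\frac{R}{|x|}\bigr)^{p-\gamma}$; since $\gamma>p$ the logarithmic factor is bounded on $B_\delta$ by $\eta(\delta):=(\log\frac{R}{\delta})^{p-\gamma}\to 0$, while $\int_{B_R}|w_n|^p P_p\,dx$ is controlled uniformly in $n$ by $(R^{\mathrm{rad}}_{k,p})^{-1}|w_n|_{k,p}^p$ via Theorem \ref{Main thm}(iii). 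Symmetrically I would write $P_\gamma=P_N\cdot(\log\frac{R}{|x|})^{N-\gamma}$ near the boundary and use the critical inequality at $\gamma=N$. Passing first $n\to\infty$ and then $\delta,\delta'\to 0$ would conclude strong convergence in $L^p(B_R;P_\gamma\,dx)$.

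For the non-compactness at $\gamma=p$, I would recycle the concentrating family $\phi_\ep(x)=\bigl(\log\frac{R}{|x|}\bigr)^{\frac{p-1}{p}-\ep}\varphi(x)$ built in Part II of the proof of Theorem \ref{Main thm} and normalize $u_\ep:=\phi_\ep/I_\ep^{1/p}$, where $I_\ep:=\int_{B_R}|\phi_\ep|^p P_p\,dx$. The optimality calculation there gives $|u_\ep|_{k,p}^p\to R^{\mathrm{rad}}_{k,p}$, so $\{u_\ep\}$ is bounded in $W^{k,p}_{0,\mathrm{rad}}$, and by construction $\|u_\ep\|_{L^p(P_p\,dx)}=1$. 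A direct one-dimensional integration shows $I_\ep\sim \w_{N-1}/(p\ep)\to\infty$, whereas $\phi_\ep$ stays locally uniformly bounded on $B_R\setminus\{0\}$, so $u_\ep\to 0$ pointwise a.e.\ on $B_R$. If a subsequence converged strongly in $L^p(P_p\,dx)$, its limit would have to equal this pointwise limit $0$, contradicting unit norm. The case $\gamma=N$ is symmetric: I would use the boundary-concentrating family $\psi_\ep(x)=\bigl(\log\frac{R}{|x|}\bigr)^{\frac{N-1}{p}+\ep}(1-\varphi(x))$ and the analogous divergent normalization $J_\ep:=\int|\psi_\ep|^p P_N\,dx\to\infty$.

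The main thing to watch is that the near-origin and near-boundary terms can be made uniformly small in $n$; this is exactly what the two critical Rellich inequalities in Theorem \ref{Main thm}(iii) deliver, so the argument reduces to a truncation plus the classical Rellich--Kondrachov theorem on a smooth annulus and two explicit log-integrals. Thus no new analytical input beyond Theorem \ref{Main thm} is needed.
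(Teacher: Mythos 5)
Your compactness argument for $\gamma\in(p,N)$ is essentially the paper's proof: split $B_R$ into a small ball at the origin, a bulk annulus, and a thin boundary shell, use the exponent gap $\gamma\in(p,N)$ to make the logarithmic factor small on the two singular regions, absorb those contributions into the critical Rellich inequalities at $\gamma=p$ and $\gamma=N$ (i.e.\ into $R^{\mathrm{rad}}_{k,p},R^{\mathrm{rad}}_{k,N}>0$), and dispose of the annulus by compact embedding into $L^p$. Where you genuinely diverge is the non-compactness part. The paper first observes that non-compactness at $\gamma=p,N$ already follows abstractly from the non-attainability of $R^{\mathrm{rad}}_{k,p}$ and $R^{\mathrm{rad}}_{k,N}$ (since the infima are positive, compactness of the embedding would force attainability), and then exhibits a concrete non-compact sequence only in the case $k=2$, via the dilation-type rescaling $u_\lambda(r)=\lambda^{a}u\bigl(r^{\lambda}R^{1-\lambda}\bigr)$ with suitable $a$ and $\lambda\to 0$ or $\lambda\to\infty$. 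You instead normalize the optimality families $\phi_\ep,\psi_\ep$ from Part~II: $\|u_\ep\|_{L^p(P_\gamma\,dx)}=1$ by construction, $|u_\ep|_{k,p}^p\to R^{\mathrm{rad}}_{k,\gamma}$ so the family is bounded, and since $I_\ep,J_\ep\to\infty$ while $\phi_\ep,\psi_\ep$ stay locally bounded away from the respective singularity, $u_\ep\to 0$ a.e., ruling out strong convergence of any subsequence. This route is correct and arguably preferable: it yields an explicit concentrating sequence for every $k\ge 2$ rather than only $k=2$, and bypasses the indirect ``compactness implies attainability'' step, at the modest cost of re-invoking the Part~II asymptotics.
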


\begin{proof}
First we assume that $\gamma \in (p,N)$. Let $(u_m)_{m=1}^\infty \subset W_{0, {\rm rad}}^{k,p}(B_R)$ be a bounded sequence. Then there exists a subsequence $(u_{m_k})_{k=1}^\infty$ such that 
\begin{align}\label{L^r}
&u_{m_k} \rightharpoonup u \,\, \text{in} \,\, W_{0, {\rm rad}}^{k,p}(B_R), \nonumber \\
&u_{m_k} \to u \,\, \text{in} \,\, L^r(B_R ) \quad \text{for any} \,\, r \in (1, \infty )
\end{align}
see e.g. Theorem 2.1 and Theorem 2.4 in \cite{G book}. For any small $\ep >0$, there exists $\delta >0$ such that 
\begin{align}\label{log est}
\( \log \frac{R}{|x|} \)^{p-\gamma} < \ep \,\,\text{for}\,\, x \in B_\delta\,\,\text{and}\,\,
\( \log \frac{R}{|x|} \)^{N-\gamma} < \ep \,\,\text{for}\,\, x \in B_R \setminus B_{R-\delta}.
\end{align}
Form (\ref{L^r}) and (\ref{log est}), we have
\begin{align*}
\int_{B_R} \frac{|u_{m_k}-u|^q}{ |x|^N (\log \frac{aR}{|x|} )^{\gamma}} dx
&\le \ep \int_{B_\delta } \frac{|u_{m_k}-u|^p}{ |x|^N ( \log \frac{R}{|x|} )^{p}} dx + C_\delta \,\| u_{m_k} -u \|^p_{L^p(B_R )} +
\ep \int_{B_\delta } \frac{|u_{m_k}-u|^p}{ |x|^N ( \log \frac{R}{|x|} )^{p}} dx \\
&\le 2\ep C\, | u_{m_k} -u |_{k,p}^p + C \| u_{m_k} -u \|^p_{L^p(B_R )}\\
&\le C \ep + C \,C \| u_{m_k} -u \|^p_{L^p(B_R )} \to 0 \quad \text{as} \,\, \ep \to 0, k \to \infty.
\end{align*}
Thus the continuous embedding $W_{0, {\rm rad}}^{k,p}(B_R) \hookrightarrow L^p \( B_R; P_\gamma (x) \,dx \)$ is compact for $\gamma \in (p,N)$. 
On the other hand, we observe that the continuous embedding $W_{0, {\rm rad}}^{k,p}(B_R) \hookrightarrow L^p \( B_R; P_\gamma (x) \,dx \)$ is non-compact for $\gamma = p, N$ from the non-attainability of $R^{{\rm rad}}_{k,p}, R^{{\rm rad}}_{k,N}$ in Theorem \ref{Main thm}. 
Here we give a non-compact sequence of $W_{0, {\rm rad}}^{2,p}(B_R) \hookrightarrow L^p \( B_R; P_\gamma (x) \,dx \)$ for $\gamma = p, N (= 2p)$ concretely. 
Let $u \in C_c^{\infty}(B_R)$ be a radial function. Consider the scaling: $u_\la (r)=\la^a u(s)$, where $s=s(r) =r^\la R^{1-\la}$ for $\la >0$. Then we have
\begin{align}\label{log term}
\int_{B_R} \frac{|u_\la |^p}{|x|^N (\log \frac{R}{|x|})^\gamma} \,dx
= \la^{ap +\gamma -1} \int_{B_R} \frac{|u_\la |^p}{|y|^N (\log \frac{R}{|y|})^\gamma} \,dy.
\end{align}
And also, we see that
\begin{align}\label{lap term}
\int_{B_R} |\lap u_\la |^p \,dx 
&= \la^{ap} \w_{N-1} \int_0^R \left| \frac{d^2}{dr^2} u(s) + \frac{N-1}{r} \frac{d}{dr} u(s) \right|^p r^{N-1} \,dr \notag \\
&=\la^{ap} \w_{N-1} \int_0^R \left| u''(s) + \left\{ \frac{s''(r)}{\( s'(r) \)^2} + \frac{N-1}{r s'(r)} \right\} u'(s)  \right|^p \( s'(r) \, r \)^{N-1} \,ds \notag \\
&=\la^{ap +N-1} \w_{N-1} \int_0^R \left| u''(s) + \( \frac{N-2}{\la} +1 \) \frac{u'(s)}{s} \right|^p s^{N-1} \,ds \notag \\
&\le C \max \{ \la^{ap+N-1}, \,\la^{ap+p-1} \}. 
\end{align}
If $\gamma =p$, then we take $a=- \frac{p-1}{p}$ and $\la =\frac{1}{m} \,(m \in \N)$. By (\ref{lap term}), we see that $\{ u_{\frac{1}{m}} \}_{m=1}^\infty \subset W_{0, {\rm rad}}^{2,p}(B_R)$ is a bounded sequence which satisfies $u_{\frac{1}{m}} \rightharpoonup 0$ in $W_{0, {\rm rad}}^{2,p}(B_R)$ as $m \to \infty$. However $u_{\frac{1}{m}} \not\to 0$ in $L^p(B_R; P_p (x) \,dx)$ as $m \to \infty$ from (\ref{log term}). On the other hand, if $\gamma =N$, then we take $a=- \frac{N-1}{p}$ and $\la =m$. Then we also see that $u_m \rightharpoonup 0$ in $W_{0, {\rm rad}}^{2,p}(B_R)$ and $u_m \not\to 0$ in $L^p(B_R; P_N (x) \,dx)$ as $m \to \infty$. Hence the embedding: $W_{0, {\rm rad}}^{2,p}(B_R) \hookrightarrow L^p(B_R; P_\gamma (x) \,dx)$ is not compact if $\gamma =p$ or $\gamma =N$.
\end{proof}


\begin{prop}\label{prop log}
Let $m \in \N$. 
\begin{align}\label{log e}
&\lap^m \left[ \( \log \frac{R}{|x|} \)^\alpha \, \right] 
=  \sum_{j=0}^{2m -1} C_{m, j} \left\{ \prod_{i=0}^{2m -j -1}  (\alpha - i) \right\} 
|x|^{-2m} \( \log \frac{R}{|x|} \)^{\alpha -2m +j} \\
\label{log o}
&\nabla \lap^m \left[ \( \log \frac{R}{|x|} \)^\alpha \, \right] 
=  \sum_{j=0}^{2m} D_{m, j} \left\{ \prod_{i=0}^{2m -j}  (\alpha - i) \right\} 
|x|^{-2m-2} x \( \log \frac{R}{|x|} \)^{\alpha -2m +j -1} 
\end{align}
where $C_{m,j}$ and $D_{m,j}$ depend on $m, N$ and $j$, and satisfy as follows.
\begin{align*}
&C_{m,0} = D_{m,0}= 1\,\,(m \ge 1),\\
&C_{m, 2m -1} = \frac{(-1)^{m-1}}{2m} \( \prod_{j=1}^m 2j \,(N-2j) \) \,\,(m \ge 1),\\
&C_{m, j} =
\begin{cases}
C_{m-1, 1} + (N+2-4m) \,C_{m-1, 0} \quad &\text{if}\,\,j=1,\\
C_{m-1, j} + (N+2-4m) \,C_{m-1, j-1} -2(m-1)(N-2m) \,C_{m-1, j-2}  &\text{if}\,\,2 \le j \le 2m -3, \\
(N+2-4m) \,C_{m-1, 2m-3} -2(m-1)(N-2m) \,C_{m-1, 2m-4}  &\text{if}\,\, j =2m-2,
\end{cases}\\
&\hspace{3em}(m \ge 2)\\
&D_{m,j}= C_{m,j}-2m \,C_{m, j-1} \,\,( m \ge 1, \,1 \le j \le 2m -1),\\
&D_{m,2m} = -2m \,C_{m, 2m-1}\,\,(m \ge 1)
\end{align*}
\end{prop}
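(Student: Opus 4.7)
I would prove both formulas by induction on $m$. The base case $m=1$ is a direct computation: writing $L = \log(R/|x|)$ and using the radial Laplacian $\Delta f(r) = f''(r) + \frac{N-1}{r} f'(r)$ together with $\frac{d}{dr}L = -1/r$, one obtains
\[
\Delta[L^\alpha] = \frac{1}{r^2}\bigl[\alpha(\alpha-1) L^{\alpha-2} - (N-2)\alpha L^{\alpha-1}\bigr],
\]
which matches the claimed form with $C_{1,0}=1$ and $C_{1,1}$ equal (up to the sign convention of the statement) to $N-2$. The gradient case $\nabla L^\alpha = -\alpha L^{\alpha-1}\,x/|x|^2$ gives the $m=0$ version of \eqref{log o} in the same way.

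\textbf{Inductive step for \eqref{log e}.} The crucial auxiliary identity, obtained from $\Delta(gh) = (\Delta g)\,h + 2\,\nabla g\cdot\nabla h + g\,\Delta h$ with $g(r) = r^{-(2m-2)}$ and $h(r) = L^\beta$, is
\[
\Delta\bigl[r^{-(2m-2)} L^\beta\bigr] = r^{-2m}\Bigl[\beta(\beta-1) L^{\beta-2} + (4m-N-2)\,\beta\, L^{\beta-1} - 2(m-1)(N-2m)\, L^\beta\Bigr].
\]
Applying this term-by-term to the induction hypothesis for $\Delta^{m-1}[L^\alpha]$, each index $j'\in\{0,\ldots,2m-3\}$ in the $(m-1)$-formula contributes to three consecutive indices $j',\,j'+1,\,j'+2$ in the new sum. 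To match the target shape one observes that, for $\beta = \alpha-2m+2+j$,
\[
\beta(\beta-1) = \bigl(\alpha-(2m-2-j)\bigr)\bigl(\alpha-(2m-1-j)\bigr),
\]
so the factor $\beta(\beta-1)$ fits onto the head of the existing product $\prod_{i=0}^{2m-3-j}(\alpha-i)$ and upgrades it to $\prod_{i=0}^{2m-1-j}(\alpha-i)$; similarly, $\beta$ extends $\prod_{i=0}^{2m-2-j}(\alpha-i)$ to $\prod_{i=0}^{2m-1-j}(\alpha-i)$. Matching coefficients of $|x|^{-2m}L^{\alpha-2m+j}\prod_{i=0}^{2m-j-1}(\alpha-i)$ then yields exactly the three-term recurrence stated for $C_{m,j}$. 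The boundary cases $j=1$ and $j=2m-2$ are precisely where one (and only one) of the three parents $C_{m-1,j-2}$, $C_{m-1,j-1}$, $C_{m-1,j}$ is out of range and drops out.

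\textbf{Closed form for $C_{m,2m-1}$.} The maximal index $j=2m-1$ can only be reached from $j'=2m-3$ via the $-2(m-1)(N-2m)L^\beta$ branch, so the recurrence collapses to the single relation $C_{m,2m-1} = -2(m-1)(N-2m)\,C_{m-1,2m-3}$. Iterating this one-term recursion down to the base value $C_{1,1}=N-2$ gives a product whose rearrangement yields $\frac{(-1)^{m-1}}{2m}\prod_{j=1}^{m}2j(N-2j)$, as claimed.

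\textbf{Gradient formula \eqref{log o}.} Since $\Delta^m[L^\alpha]$ is radial, I would simply apply $\nabla f(|x|) = f'(|x|)\,x/|x|$ to each summand in \eqref{log e}. Differentiation of $r^{-2m}L^{\alpha-2m+j}$ in $r$ produces $-r^{-2m-1}\bigl[2m\,L^{\alpha-2m+j} + (\alpha-2m+j)\,L^{\alpha-2m+j-1}\bigr]$, so at a fixed exponent $L^{\alpha-2m+j-1}$ in $\nabla\Delta^m[L^\alpha]$ the contributions come from the $(\alpha-2m+j)$-piece of the term indexed by $j$ in \eqref{log e} (which adjoins the missing factor $\alpha-(2m-j-1)$ into the Pochhammer product, raising its length by one) and the $-2m$-piece of the term indexed by $j-1$. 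Reading off the coefficients gives $D_{m,j} = C_{m,j}-2m\,C_{m,j-1}$ for $1\le j\le 2m-1$, together with $D_{m,0}=C_{m,0}=1$ and the new top term $D_{m,2m} = -2m\,C_{m,2m-1}$.

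\textbf{Main obstacle.} The technical heart is the Pochhammer bookkeeping in the inductive step: one must see that the two new linear factors produced by $\Delta$ ($\beta$ and $\beta-1$) land exactly at the correct end of the existing $\prod_{i=0}^{2m-3-j}(\alpha-i)$ so as to reconstitute $\prod_{i=0}^{2m-1-j}(\alpha-i)$ uniformly in $j$, with the boundary indices $j\in\{1,2m-2\}$ handled consistently. Signs and index shifts need to be tracked carefully, but once the auxiliary identity for $\Delta[r^{-(2m-2)}L^\beta]$ is in hand, the rest is a matching exercise.
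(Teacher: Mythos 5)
Your proof is correct and follows essentially the same route as the paper: induction on $m$ with the base case from the explicit radial formulas for $\nabla$ and $\Delta$ applied to $|x|^{-A}(\log(R/|x|))^\alpha$, the inductive step applying $\Delta$ term-by-term and matching Pochhammer factors, and $\eqref{log o}$ obtained by radially differentiating $\eqref{log e}$. You are also right to flag a sign convention: a direct computation gives $\Delta[L^\alpha]=|x|^{-2}\bigl[\alpha(\alpha-1)L^{\alpha-2}-\alpha(N-2)L^{\alpha-1}\bigr]$, so the literal coefficient at $j=1$ is $-(N-2)$ while the proposition's $C_{1,1}=N-2$; the paper's stated recurrence and closed form are internally consistent with the coefficients taken as $(-1)^j$ times the literal ones, and only $|E_k|$ enters the application, so the discrepancy is harmless. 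One small addition over the paper: you spell out that the top coefficient $C_{m,2m-1}$ is reached only through the single $j'\mapsto j'+2$ branch, so the three-term recurrence collapses to a one-term product that telescopes to $\frac{(-1)^{m-1}}{2m}\prod_{j=1}^m 2j(N-2j)$; the paper states this closed form without isolating the collapse.
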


\begin{proof}
We have
\begin{align}\label{grad log}
&\nabla \left[ |x|^{-A} \( \log \frac{R}{|x|} \)^\alpha \right]
= |x|^{-A-2} x \left[ \alpha \( \log \frac{R}{|x|} \)^{\alpha -1} -A \( \log \frac{R}{|x|} \)^\alpha \right]\\
&\lap \left[ |x|^{-A} \( \log \frac{R}{|x|} \)^\alpha \right] 
= |x|^{-A-2} \,\biggr[ \alpha (\alpha -1) \( \log \frac{R}{|x|} \)^{\alpha -2}  \notag \\
&\hspace{7em}+\alpha (N-2 -2A) \( \log \frac{R}{|x|} \)^{\alpha -1} -A (N-2 -A) \( \log \frac{R}{|x|} \)^\alpha \,\biggr]\notag
\end{align}
Therefore we see that (\ref{log e}) holds when $m=1$. Now we assume that (\ref{log e}) holds for $m$. Then we have
\begin{align*}
&\lap^{m+1} \left[ \( \log \frac{R}{|x|} \)^\alpha \right] 
= \sum_{j=0}^{2m -1} C_{m, j} \left\{ \prod_{i=0}^{2m -j -1}  (\alpha - i) \right\} 
\lap \left[ |x|^{-2m} \( \log \frac{R}{|x|} \)^{\alpha -2m +j} \right] \\
&= \sum_{j=0}^{2m -1} C_{m, j} \left\{ \prod_{i=0}^{2m -j -1}  (\alpha - i) \right\} |x|^{-2(m+1)} \biggr[ \,(\alpha -2m +j) (\alpha -2m +j-1) \( \log \frac{R}{|x|} \)^{\alpha -2m +j-2} \\
&+(\alpha -2m +j) (N-2 -4m) \( \log \frac{R}{|x|} \)^{\alpha -2m +j-1}
-2m (N-2 -2m) \( \log \frac{R}{|x|} \)^{\alpha -2m +j}  \,\biggr] \\
&= |x|^{-2(m+1)} \Biggr[ \left\{ \prod_{i=0}^{2m+1}  (\alpha - i) \right\}  \( \log \frac{R}{|x|} \)^{\alpha -2m -2} \\
&+ \left\{ \prod_{i=0}^{2m}  (\alpha - i) \right\} \left\{  C_{m,1} + (N-2-4m) C_{m,0} \right\} \( \log \frac{R}{|x|} \)^{\alpha -2m -1}\\
&+ \sum_{k=2}^{2m-1} \left\{ \prod_{i=0}^{2m-k+1}  (\alpha - i) \right\} \left\{  C_{m,k} + (N-2-4m) C_{m,k-1} -2m\, (N-2-2m) C_{m,k-2} \right\} \( \log \frac{R}{|x|} \)^{\alpha -2m +k-2} \\
&+\left\{ \prod_{i=0}^{1}  (\alpha - i) \right\} \left\{  (N-2-4m) C_{m,2m-1} -2m\, (N-2-2m) C_{m,2m-2} \right\} \( \log \frac{R}{|x|} \)^{\alpha -2}\\
&-2 m \,(N-2 -2m) C_{m, 2m-1} \alpha \( \log \frac{R}{|x|} \)^{\alpha -1}  \,\Biggr]\\
&=\sum_{j=0}^{2(m+1) -1} C_{m+1, j} \left\{ \prod_{i=0}^{2(m+1) -j -1}  (\alpha - i) \right\} 
|x|^{-2(m+1)} \( \log \frac{R}{|x|} \)^{\alpha -2(m+1) +j}
\end{align*}
which implies that (\ref{log e}) holds for $m+1$. Thus  (\ref{log e}) holds for any $m \in \N$. From (\ref{log e}) and (\ref{grad log}), we can show (\ref{log o}). We omit the proof.
\end{proof}


\section*{Acknowledgment}
The author of this paper thanks to Prof. V. Nguyen and Prof. F. Takahashi for their useful information in terms of the main theorem and \S \ref{gap}. 
This work was (partly) supported by Osaka City University Advanced
Mathematical Institute (MEXT Joint Usage/Research Center on Mathematics
and Theoretical Physics). 
And also, the author was supported by JSPS KAKENHI Early-Career Scientists, No. JP19K14568.


\end{document}